\DeclareMathOperator{\Isom}{Isom}
\DeclareMathOperator{\Ker}{Ker}
\DeclareMathOperator{\ad}{\mathrm{ad}}
\DeclareMathOperator{\Ad}{\mathrm{Ad}}
\DeclareMathOperator{\sspan}{\mathrm{span}}
\newcommand{\g}{\mathfrak{g}}
\newcommand{\kk}{\mathfrak{k}}
\newcommand{\ii}{\mathfrak{i}}
\newcommand{\m}{\mathfrak{m}}
\newcommand{\rr}{\mathfrak{r}}
\newcommand{\pp}{\mathfrak{p}}
\newcommand{\n}{\mathfrak{n}}
\newcommand{\K}{\mathcal{K}}
\newcommand{\B}{\mathcal{B}}
\newcommand{\R}{\mathbb{R}}
\newcommand{\Z}{\mathbb{Z}}
\newcommand{\PSL}{\mathrm{PSL}}
\newcommand{\SO}{\mathrm{SO}}
\newcommand{\so}{\mathfrak{so}}
\newcommand{\Hl}{\vec{H}}
\newcommand{\Hv}{\Hl_{\mathrm{vert}}}
\theoremstyle{definition}
\newtheorem{definition}{Definition}
\newtheorem{remark}{Remark}
\newtheorem{example}{Example}
\theoremstyle{plain}
\newtheorem{corollary}{Corollary}
\newtheorem{lemma}{Lemma}
\newtheorem{theorem}{Theorem}
\newtheorem{proposition}{Proposition}
\newenvironment{enumerate*}%
  {\begin{enumerate}%
    \setlength{\itemsep}{1pt}%
    \setlength{\parskip}{1pt}}%
  {\end{enumerate}}
\title{Homogeneous geodesics in sub-Riemannian geometry
\footnote{The work is supported by the Russian Science Foundation under grant 22-21-00877 (https://rscf.ru/en/project/22-21-00877/) and performed in Ailamazyan Program Systems Institute of Russian Academy of Sciences.}}
\author{
A.\,V.~Podobryaev \\ A.\,K.~Ailamazyan Program Systems
Institute of RAS \\ \tt{alex@alex.botik.ru} \\
}
\date{}
\begin{document}

\maketitle

\begin{abstract}
We study homogeneous geodesics of sub-Riemannian manifolds, i.e.,
normal geodesics that are orbits of one-parametric subgroups of isometries.
We obtain a criterion for a geodesic to be homogeneous in terms of its initial momentum.
We prove that any weakly commutative sub-Riemannian homogeneous space is geodesic orbit,
that means all geodesics are homogeneous.
We discuss some examples of geodesic orbit sub-Riemannian manifolds.
In particular, we show that geodesic orbit Carnot groups are only groups of step $1$ and $2$.
Finally, we get a broad condition for existence of at least one homogeneous geodesic.

\textbf{Keywords}: homogeneous space, isometry, geodesic, geodesic orbit manifold, integration, weakly symmetric spaces, Riemannian geometry, sub-Riemannian geometry, Carnot group, geometric control theory.

\textbf{AMS subject classification}:
53C30, 
53C17, 
35R03. 
\end{abstract}

\section*{\label{sec-introduction}Introduction}

In this paper by \emph{a sub-Riemannian manifold} we mean a triple $(M, \Delta, \B)$, where
$M$ is a smooth manifold,
$\Delta$ is \emph{a smooth distribution of constant rank} on $M$, i.e.,
a sub-bundle of the tangent bundle $TM$,
and $\B$ is a scalar product on $\Delta$ smoothly depending on a point of the manifold $M$.
We assume that the distribution $\Delta$ is bracket generating, i.e.,
a finite number of Lie brackets of the vector fields that are tangent to the distribution span the whole tangent bundle $TM$.

\begin{definition}
\label{def-sR}
\emph{An admissible (or a horizontal) curve} is a Lipschitz curve $\gamma : [0,T] \rightarrow M$ such that
$\dot{\gamma}(t) \in \Delta_{\gamma(t)}$ for a.e. $t \in [0, T]$.
\emph{The sub-Riemannian length} of the admissible curve $\gamma$ equals
$$
\int_{0}^{T}{\sqrt{\B(\dot{\gamma}(t),\dot{\gamma}(t))} \, dt}.
$$
\emph{The sub-Riemannian distance} (or \emph{the Carnot-Caratheodory distance}) between points $p,q \in M$ is
the infimum of sub-Riemannian length of admissible curves connecting the points $p$ and $q$.
A curve with natural parametrization is called \emph{a geodesic} if its sufficiently small arcs are shortest arcs.
\end{definition}

Thus, the sub-Riemannian structure on the manifold $M$ provides a structure of metric space on $M$
(for details and for sub-Riemannian geometry in general we refer to the book~\cite{agrachev-barilari-boscain}).

\begin{definition}
\label{def-isom}
A diffeomorphism $f : M \rightarrow M$ of a sub-Riemannian manifold $M$ is called \emph{an isometry} if
$$
d_mf(\Delta_m) = \Delta_{f(m)}, \quad \B_m(v, w) = \B_{f(m)}(d_mf(v), d_mf(w)) \quad
\text{for any}\quad  m \in M, \ v, w \in \Delta_m.
$$
\end{definition}

One can define an isometry more directly as a distance preserving homeomorphism.
This definition is equivalent to Definition~\ref{def-isom} for an equiregular sub-Riemannian structure,
in particular, for a left-invariant sub-Riemannian structure.
Moreover, in this case the group of isometries is a finite-dimensional Lie group~\cite{capogna-ledonne}.

A sub-Riemannian geodesic is called \emph{homogeneous} if it is an orbit of a one-parametric group of isometries.
A sub-Riemannian manifold is called \emph{geodesic orbit} if any normal geodesic is homogeneous.
Firstly this notion appears in the paper~\cite{kowalski-vanhecke} by O.~Kowalski and L.~Vanhecke in Riemannian case.
We refer to the book by V.\,N.~Berestovskii and Yu.\,G.~Nikonorov~\cite{berestovskii-nikonorov-book} for a historical overview of the results obtained for homogeneous geodesics and geodesic orbit Riemannian manifolds.

The objective of this paper is a generalization to sub-Riemannian case of some of the results on homogeneous geodesics and the geodesic orbit property known for Riemannian manifolds.
We use an approach from geometric control theory~\cite{agrachev-sachkov} and
characterize normal geodesics as projections of phase curves of a Hamiltonian vector field on the cotangent bundle.
Any geodesic starting at a fixed point is determined by an initial covector (momentum) instead of an initial vector in Riemannian geometry.
Therefore, we study homogeneous properties of geodesics in terms of initial momenta.

The paper has the following structure.
In Section~\ref{sec-geodesics} we consider a sub-Riemannian problem as an optimal control problem,
then we define corresponding extremal curves in terms of a Hamiltonian vector field on the cotangent bundle.
Then in Section~\ref{sec-homogeneousgeodesics} we obtain a criterion for homogeneous geodesics using vertical part of this Hamiltonian system and consider some examples.
Especially we discuss geodesics of left-invariant sub-Riemannian structures on Carnot groups in Section~\ref{sec-nilpotent}.
We prove there that only $2$-step Carnot groups are geodesic orbit.
Section~\ref{sec-go} is devoted to several general properties of sub-Riemannian geodesic orbit manifolds.
In particular, we get a criterion for a sub-Riemannian manifold to be geodesic orbit,
we prove that the geodesic flow of a geodesic orbit sub-Riemannian manifold is integrable in non-commutative sense.
Also we obtain that any weakly commutative (in particular, weakly symmetric) sub-Riemannian homogeneous space is geodesic orbit.
Finally, in Section~\ref{sec-existence} we obtain some broad conditions for existence of at least one homogeneous geodesic on a sub-Riemannian manifold.

Let us introduce some notation that we will use throughout the paper.
We denote Lie groups by uppercase Latin letters and corresponding Lie algebras by same lowercase Gothic letters.
The Killing form of a Lie algebra $\g$ is $\K_{\g}$.
If it is clear which Lie group we are speaking about, then we omit a subscript of this symbol.
The left and right shifts by an element $g$ of a Lie group are denoted by $L_g$ and $R_g$ respectively.
We use the same letters for their differentials. The identity element of a group is denoted by symbol $e$.
Since we consider a left-invariant distribution we will write just $\Delta$ for the subspace $\Delta_e \subset T_eM$.
If $U \subset V$ is a vector subspace of a vector space $V$, then $U^{\circ} \subset V^*$ is its annihilator in the dual space.
For a smooth function $F$ on a cotangent bundle $T^*M$ we denote by $\vec{F}$ its symplectic gradient with respect to the canonical symplectic structure on $T^*M$, i.e.,
the Hamiltonian vector field corresponding to the Hamiltonian $F$.

In this paper we will consider a Lie group of isometries $G \subset \Isom{M}$ acting on the manifold $M$ transitively.
So, we study homogeneous geodesics that are orbits of one-parametric subgroups of the group $G$.
We denote by $K = \{g \in G \, | \, go = o\}$ the isotropy subgroup of a point $o \in M$.
Thus, we can consider $M = G/K$ as a homogeneous space.
It is well known (see for example~\cite{kowalski-szenthe}) that there exists \emph{a reductive decomposition}
$\g = \m \oplus \kk$, where the subspace $\m$ is an $\Ad{K}$-invariant complement to the subalgebra $\kk$.
This subspace $\m$ is naturally isomorphic to the tangent space $T_oM$.

Also we assume that the group $G$ acts on the manifold $M$ \emph{effectively}, i.e.,
there are no nontrivial group elements that fix all points of the manifold $M$.

The author is grateful to prof.~Yu.\,L.~Sachkov for several useful remarks.

\section{\label{sec-geodesics}Sub-Riemannian geodesics}

Here we describe sub-Riemannian geodesics in terms of Hamiltonian vector fields on the cotangent bundle $T^*G$ via Pontryagin maximum principle.
Simultaneously we introduce some necessary notation.

Fist of all lift our sub-Riemannian problem from the homogeneous space $M = G/K$ to the Lie group $G$.
As a result we have an optimal control problem to find an optimal admissible curve $g: [0, T] \rightarrow G$ going from one left coset to another, see below~\eqref{eq-controlproblem}.
Due to the left-invariance we may assume that the starting left coset is just the subgroup $K$.
So, the problem is to find a control $u \in  L^{\infty}([0, T], \Delta)$ and a Lipschitz curve $g_u : [0, T] \rightarrow G$
for a given $g_1 \in G$ such that
\begin{equation}
\label{eq-controlproblem}
g_u(0) \in K, \quad g_u(T) \in g_1K, \quad \dot{g}_u(t) = L_{g_u(t)} u(t), \quad \int_{0}^{T}{\sqrt{\B(u(t), u(t))} \, dt} \rightarrow \min.
\end{equation}
It is equivalent to minimize the energy functional ${{1}\over{2}} \int_{0}^{T}{\B(u(t), u(t)) \, dt}$ instead of the functional~\eqref{eq-controlproblem}
because of Cauchy-Bunyakowski-Schwartz inequality. Below we use the energy functional.

\begin{remark}
\label{rem-G-left-K-right-invariance}
Notice that problem~\eqref{eq-controlproblem} is $G$-left and $K$-right invariant.
\end{remark}

Consider the following family of functions on the cotangent bundle $T^*G$ depending on the parameters $u \in \Delta$ and $\nu \in \R$:
$$
H_u^{\nu}(\lambda) = \langle L_{\pi(\lambda)} u, \lambda \rangle + {{\nu}\over{2}} \B(u, u), \qquad \lambda \in T^*G,
$$
where $\pi: T^*G \rightarrow G$ is the natural projection and
$\langle \,\cdot\, , \,\cdot\, \rangle$ is the natural pairing of elements from $T_gG$ and $T^*_gG$.

A necessary condition of optimality is given by the Pontryagin maximum principle~\cite{pontryagin,agrachev-sachkov}.

\begin{theorem}[Pontryagin maximum principle]
\label{th-pmp}
Assume that $\tilde{u} \in  L^{\infty}([0, T], \Delta)$ is an optimal control and
$\tilde{q} : [0, T] \rightarrow G$ is the corresponding optimal curve of the problem~$\eqref{eq-controlproblem}$, then
there exist a Lipschitz curve $\lambda: [0, T] \rightarrow T^*G$ and a number $\nu \leqslant 0$ such that\\
$(1)$ $(\nu, \lambda) \neq 0$,\\
$(2)$ $\pi(\lambda(t)) = \tilde{q}(t)$ for $t \in [0, T]$,\\
$(3)$ $\dot{\lambda}(t) = \vec{H}_{\tilde{u}(t)}^{\nu}(\lambda(t))$ for a.\,e. $t \in [0, T]$,\\
$(4)$ $($maximum condition$)$ $H_{\tilde{u}(t)}^{\nu}(\lambda(t)) = \max_{u \in \Delta}{H_u^{\nu}(\lambda(t))}$ for a.\,e. $t \in [0, T]$,\\
$(5)$ $($transversality condition$)$ $\lambda(t) \in (T_{\tilde{q}(t)} \tilde{q}(t)K)^{\circ}$ for $t \in [0, T]$.
\end{theorem}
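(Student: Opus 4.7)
My plan is to prove this specialization of the Pontryagin maximum principle by the classical needle-variation method, following the exposition in~\cite{agrachev-sachkov}. The two features of~\eqref{eq-controlproblem} requiring extra care are the left-invariant form of the dynamics, which enters through the specific Hamiltonian $H_u^\nu$, and the ``for all~$t$'' form of the transversality condition~(5). I would first embed the problem into the cost-extended manifold $G\times\R$ by adjoining a coordinate $c$ with $\dot c=\frac{1}{2}\B(u,u)$, form the attainable set $\mathcal A_T\subset G\times\R$ from the initial manifold $K\times\{0\}$, and observe that optimality of $\tilde u$ is equivalent to $(\tilde q(T),\tilde c(T))$ being a boundary point of $\mathcal A_T$ under the constraint that the $G$-component lies in $g_1K$; the PMP will then arise from Hahn-Banach separation of two convex cones at that boundary point.

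Next I would construct the cone of first-order variations. For each Lebesgue point $\tau\in(0,T)$ of $\tilde u$ and each $v\in\Delta$ the needle control equal to $v$ on $[\tau-\varepsilon,\tau]$ and to $\tilde u$ elsewhere produces, via the smooth flow of the left-invariant field $g\mapsto L_g u$, an endpoint perturbation whose derivative in $\varepsilon$ at $0$ is a well-defined tangent vector in $T_{(\tilde q(T),\tilde c(T))}(G\times\R)$. The convex conical hull of all such vectors, together with the image under the linearized reference flow of the tangent cone to $K\times\{0\}$, yields a convex cone $\mathcal C$. Optimality prevents $\mathcal C$ from meeting the open cone $T_{\tilde q(T)}\tilde q(T)K\times\R_{<0}$ of target directions of decreased cost, and Hahn-Banach then produces a nontrivial separating functional $(\lambda(T),\nu)$ with $\nu\leqslant 0$.

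Interpreting this functional yields (1)--(4) and the endpoint cases of~(5): nontriviality is~(1); defining $\lambda(t)\in T^*_{\tilde q(t)}G$ as the cotangent pullback of $\lambda(T)$ along the reference flow gives~(2); the induced evolution on $T^*G$ is Hamiltonian for the family $H_u^\nu$ by a standard computation, giving~(3); substituting the explicit form of needle variations into the separation inequality and canceling the common reference terms collapses it to the pointwise maximum condition~(4); and annihilation of $T_{\tilde q(T)}\tilde q(T)K$ is built into the separation. To upgrade endpoint transversality to the global form~(5), I would use the $K$-right invariance noted in Remark~\ref{rem-G-left-K-right-invariance}: each $\xi\in\kk$ generates a right-invariant vector field on $G$ that is an infinitesimal symmetry of every $H_u^\nu$, so by Noether's theorem the function $\lambda\mapsto\langle\lambda,R_{\pi(\lambda)}\xi\rangle$ is a first integral of $\vec H_{\tilde u(t)}^\nu$, and its vanishing at $t=T$ forces vanishing for all $t\in[0,T]$.

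The main technical obstacle is the first-order analysis of needle variations for a merely $L^\infty$ control, which requires Lebesgue density arguments and continuous dependence of the flow on the control in the appropriate topology; smoothness and completeness of left-invariant vector fields, together with compactness of $K$, make this manageable but it is where the bulk of the analytic work sits. The Noether-type argument for the global form of~(5) and the derivation of the Hamiltonian form of the adjoint equation in left-invariant coordinates are both short by comparison.
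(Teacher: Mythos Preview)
The paper does not prove Theorem~\ref{th-pmp}: it is stated as a known result with references to~\cite{pontryagin,agrachev-sachkov}, and the exposition immediately moves on to defining extremals. So there is no ``paper's own proof'' to compare against; your outline is essentially the standard needle-variation argument from those references, and steps (1)--(4) plus endpoint transversality are sketched correctly.

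There is, however, a genuine gap in your derivation of the global form of~(5). You claim that for $\xi\in\kk$ the \emph{right}-invariant vector field $g\mapsto R_g\xi$ is an infinitesimal symmetry of every $H_u^\nu$, and that the associated Noether integral $\lambda\mapsto\langle\lambda,R_{\pi(\lambda)}\xi\rangle$ vanishes at $t=T$ by endpoint transversality. Both halves are off. Left $G$-invariance of $H_u^\nu$ does make each $\langle\lambda,R_{\pi(\lambda)}\xi\rangle$ a first integral, but these are the components of the \emph{right}-trivialized momentum, whereas condition~(5) says the \emph{left}-trivialized momentum $p(t)=L_{\tilde q(t)}^*\lambda(t)$ lies in $\kk^\circ$; since $R_{\tilde q(T)}\xi = L_{\tilde q(T)}\Ad(\tilde q(T)^{-1})\xi$ and $\Ad(\tilde q(T)^{-1})\xi\notin\kk$ in general, endpoint transversality does not force your integrals to vanish. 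Conversely, the vector fields whose Noether integrals \emph{are} the quantities $p(\xi)$, $\xi\in\kk$, are the left-invariant fields $g\mapsto L_g\xi$ generating the right $K$-action, and these are \emph{not} symmetries of an individual $H_u^\nu$: one computes $H_u^\nu(R_{k^{-1}}^*\lambda)=H_{\Ad(k)u}^\nu(\lambda)$, so the control must be transformed simultaneously.

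The fix is to use the maximum condition~(4). The \emph{maximized} Hamiltonian $H=\max_{u\in\Delta}H_u^\nu$ is genuinely right-$K$-invariant (because $\Ad K$ preserves both $\Delta$ and $\B$), so $p(\xi)$ for $\xi\in\kk$ is conserved along normal extremals; equivalently, with $\tilde u=\B^{-1}(p|_\Delta)$ one checks directly that $\dot p(\xi)=p([\tilde u,\xi])=-\B(\tilde u,(\ad\xi)\tilde u)=0$ since $\ad\xi$ is $\B$-skew on $\Delta$. Thus~(5) for all $t$ follows from endpoint transversality together with~(3) \emph{and}~(4), not from~(3) alone.
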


\begin{definition}
A curve $\lambda: [0, T] \rightarrow T^*G$ satisfying the conditions of Theorem~\ref{th-pmp} is called \emph{an extremal}.
Its projection $\tilde{g} = \pi(\lambda)$ to the group $G$ is called \emph{an extremal curve}.
If the parameter $\nu \neq 0$, then corresponding extremals and extremal curves are called \emph{normal}.
If $\nu = 0$, then these curves are called \emph{abnormal}.
The further projection of a normal extremal curve to the homogeneous space $M = G/K$ is called \emph{a normal sub-Riemannian geodesic}.
\end{definition}

Below we will consider only normal geodesics.

Consider the left trivialization of the cotangent bundle
$$
\tau: T^*G \rightarrow \g^* \times G, \qquad \tau(\lambda) = (L_g^* \lambda, g), \qquad \lambda \in T_g^*G.
$$
By the transversality condition of the Pontryagin maximum principle we get
$$
p(t) = \tau(\lambda(t)) \in \kk^{\circ} \cong \m^* \quad \text{for an extremal} \quad \lambda : [0, T] \rightarrow T^*G.
$$
It is easy to see that the maximum condition in the normal case gives us
$$
\B(\tilde{u}(t), \,\cdot\, ) = L_{\tilde{g}(t)}^* \lambda(t) \in \kk^{\circ} \subset \g^*.
$$
It follows that $\tilde{u}(t) = \B^{-1}(p(t)|_{\Delta})$, where $\B : \Delta \rightarrow \Delta^*$ maps $X \in \Delta$ to $\B(X, \,\cdot\, ) \in \Delta^*$ and
by $p|_{\Delta}$ we denote the restriction of a linear function $p \in \g^*$ to the subspace $\Delta \subset \g$.
Thereby the left-invariant normal maximized Hamiltonian of the Pontryagin maximum principle reads as $H(p) = {{1}\over{2}}\B(p|_{\Delta}, p|_{\Delta})$
as a function of variable $p \in \kk^{\circ} \subset \g^*$. (Here we transfer the quadratic form $\B$ from $\Delta$ to $\Delta^*$.)

In the left trivialization the Hamiltonian system of the Pontryagin maximum principle
$\dot{\lambda}(t) = \vec{H}(\lambda(t))$ takes the form (see, for example, \cite{agrachev-sachkov})
\begin{equation}
\label{eq-hamiltoniansystem}
\begin{array}{lll}
\dot{p} & = & (\ad^*{d_pH})p,\\
\dot{g} & = & L_g d_pH, \\
\end{array}
\qquad
(p, g) = \tau(\lambda) \in \g^* \times G.
\end{equation}
Note that the first equation of system~\eqref{eq-hamiltoniansystem}, also called \emph{the vertical part of Pontryagin's Hamiltonian system},
does not depend on the second variable $g \in G$.
So, this subsystem can be studied independently.
Finally, note that every sub-Riemannian geodesic is determined by its initial momentum $p(0) = \lambda(0) \in \g^* = T^*_eG$.

\section{\label{sec-homogeneousgeodesics}Homogeneous geodesics}

Let $M$ be a sub-Riemannian manifold, and
let a Lie group $G \subset \Isom{M}$ be a subgroup of isometries that acts on the manifold $M$ transitively and effectively.
Therefore, $M = G/K$, where $K$ is an isotropy subgroup.

\begin{definition}
A sub-Riemannian geodesic $m : [0, T] \rightarrow M$ is called \emph{a homogeneous geodesic} if
it is an orbit of one-parametric subgroup of isometries, i.e., $m(t) = \exp{(tX)} m(0)$ for some element $X \in \g$ that is called
\emph{a geodesic vector}.
\end{definition}

\begin{remark}
An important property of homogeneous geodesics in view of optimal synthesis (or, generally speaking, in view of geometric control theory) is
\emph{an equioptimality}.
This means that the time of loss of optimality (\emph{the cut time}) for a geodesic does not depend on a starting point on this geodesic.
In other words, the cut time as a function of an initial momentum of a geodesic is constant on the trajectories of the vertical part of Pontryagin's Hamiltonian system.
Yu.\,L.~Sachkov~\cite{sachkov-problems} made an amazing observation that for all completely solved sub-Riemannian problems geodesics are equioptimal.
The reason of this phenomenon is unknown,
but homogeneous geodesics deliver a clear example of such situation.
At the same time, not every equioptimal geodesic is homogeneous~\cite{sachkov-equioptimal}.
\end{remark}

There is a well known criterion for homogeneous geodesics in the Riemannian case,
which appears first in works of B.~Kostant~\cite{kostant}, \'{E}.\,B.~Vinberg~\cite{vinberg-connections}
and later in paper of O.~Kowalski and L.~Vanhecke~\cite{kowalski-vanhecke}.

\begin{lemma}[Geodesic lemma]
\label{lem-gl}
A Riemannian geodesic passing through the point $o = eK$ with a tangent vector $X \in \m$ is homogeneous if and only if
there exists $Y \in \kk$ such that
$$
\B(X, [X+Y, \g]_{\m}) = 0,
$$
where the index $\m$ denotes the $\m$-component of a vector with respect to the reductive decomposition.
Corresponding one-parametric subgroup of isometries is $\{\exp{t(X+Y)} \, | \, t \in \R\}$.
\end{lemma}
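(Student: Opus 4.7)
The plan is to reduce being a geodesic to a single pointwise Levi-Civita identity at $o$ and then expand that identity via the Koszul formula specialized to Killing fields.

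First I would observe that because $\{\exp t(X+Y)\}_{t\in\R}$ is a one-parameter subgroup of isometries and $Y\in\kk$ fixes $o$, the curve $\gamma(t)=\exp t(X+Y)\cdot o$ has $\dot\gamma(0)=X$, while for general $t$ its velocity is the isometric pushforward of $X$. Consequently $(\nabla_{\dot\gamma}\dot\gamma)(t)$ is the pushforward of $(\nabla_{\dot\gamma}\dot\gamma)(0)$, so $\gamma$ is a geodesic if and only if $(\nabla_{\dot\gamma}\dot\gamma)(0)=0$. Introducing the fundamental vector field $Z^*$ on $M$ associated to $Z=X+Y\in\g$ (the infinitesimal generator of the flow $m \mapsto \exp(sZ)m$), we have $\dot\gamma(t)=Z^*(\gamma(t))$, and $Z^*$ is Killing since $G$ acts by isometries. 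Thus the condition becomes $(\nabla_{Z^*}Z^*)(o)=0$.

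Then I would test this identity against $W^*$ for arbitrary $W\in\g$, applying the Koszul formula together with the Killing identity for both $Z^*$ and $W^*$. The derivative terms of the form $U^*\B(V^*,W^*)$ telescope, leaving $\B((\nabla_{Z^*}Z^*)(o),W^*_o)=\B(Z^*_o,[Z^*,W^*]_o)$. Using the anti-homomorphism $[U^*,V^*]=-[U,V]^*$ and the identification $V^*_o=V_\m$ under the reductive decomposition, one rewrites this as $-\B(X,[Z,W]_\m)$. Its vanishing for all $W\in\m$ is precisely the stated criterion; the enlargement of the quantifier to $W\in\g$ is free, since for $W\in\kk$ one has $[Y,W]\in\kk$ by the subalgebra property and $[X,W]\in\m$ by reductivity, while $\Ad K$-invariance of $\B$ makes $\ad_W$ skew-symmetric, forcing $\B(X,[X,W])=0$ automatically.

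Both implications fall out of this single computation: a homogeneous geodesic with geodesic vector $X+Y$ forces the algebraic identity, and conversely the identity gives $(\nabla_{\dot\gamma}\dot\gamma)(0)=0$, whence $\gamma(t)=\exp t(X+Y)\cdot o$ is the unique geodesic with initial velocity $X$. The main bookkeeping hazard is sign management — the anti-homomorphism of fundamental vector fields and the careful use of the Killing identity on both entries so that the non-algebraic terms cancel — but this is a routine check rather than a genuine obstacle.
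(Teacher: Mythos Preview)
Your argument is correct and is essentially the classical proof found in the references the paper cites (Kostant, Vinberg, Kowalski--Vanhecke): reduce to $t=0$ by isometry invariance, then use the Koszul formula together with the Killing property of the fundamental vector fields to obtain $\B\big((\nabla_{Z^*}Z^*)(o),W^*_o\big)=\B(Z^*_o,[Z^*,W^*]_o)=-\B(X,[X+Y,W]_\m)$. The bookkeeping you flag (the sign from $[U^*,V^*]=-[U,V]^*$ and the cancellation of the derivative terms via $L_{Z^*}g=0$ and $L_{W^*}g=0$) is exactly where one has to be careful, and you handle it correctly. Note, however, that the paper does \emph{not} supply its own proof of this lemma: it is quoted as a well-known Riemannian criterion and only the sub-Riemannian analogue (Lemma~\ref{lem-sRgl}) is proved in the text.

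It is still worth contrasting the two viewpoints. Your route is Levi--Civita based: the connection $\nabla$ and the Koszul formula sit on $TM$, and the algebraic condition emerges from the vanishing of $\nabla_{\dot\gamma}\dot\gamma$ at one point. The paper's proof of the sub-Riemannian version lives on $T^*G$: geodesics are projections of trajectories of the Pontryagin Hamiltonian vector field, and the criterion is read off from the vertical subsystem $\dot p=(\ad^*d_pH)p$ on $\kk^\circ$, namely that the trajectory through $p$ stays in an $\Ad^*K$-orbit. In the Riemannian case the two pictures match via $p=\B(X,\cdot)$ and $d_pH=X$, so your condition $\B(X,[X+Y,\g]_\m)=0$ is exactly the paper's $p([X+Y,\g])=0$. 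The advantage of the Hamiltonian route is that it needs no affine connection on $M$ and transfers verbatim to the sub-Riemannian setting, where there is no Levi--Civita connection to invoke; the advantage of your route is that it is self-contained within Riemannian geometry and makes the role of the Killing fields transparent.
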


The main difference of sub-Riemannian situation is that
a geodesic passing through a fixed point is determined by its initial momentum instead of an initial geodesic vector (tangent vector).
Therefore, we need a criterion in terms of initial momenta.

\begin{lemma}
\label{lem-sRgl}
The following conditions are equivalent for a sub-Riemannian structure.\\
$(1)$ A geodesic with an initial momentum $p \in \kk^{\circ}$ is homogeneous.\\
$(2)$ There exists a vector $X \in \g$ such that
$$
p([X, \g]) = 0 \qquad \text{and} \qquad X_{\m} = d_pH.
$$
$(3)$ The trajectory of the vertical part of the Pontryagin Hamiltonian system that corresponds to a geodesic lies in an $(\Ad^*{K})$-orbit on $\kk^{\circ}$.
\end{lemma}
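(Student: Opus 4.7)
The plan is to work in the left trivialization $T^*G \cong \g^* \times G$ and relate the three conditions via the $\Ad^*(K)$-action on $\kk^\circ$. The crucial input is that the $\Ad(K)$-invariance of $\B|_\Delta$ makes $H(p) = \frac{1}{2}\B^{-1}(p|_\Delta, p|_\Delta)$ an $\Ad^*(K)$-invariant function on $\kk^\circ$, with the equivariance identity $d_{\Ad^*(k) p} H = \Ad(k)\, d_pH$ for $k \in K$.

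First I would establish $(2) \Leftrightarrow (3)$ by a tangency argument. The tangent space at $p$ to the $\Ad^*(K)$-orbit through $p$ is $\ad^*(\kk)\, p$, so the Hamiltonian vector field $\ad^*(d_pH)\, p$ is tangent to this orbit iff there exists $Y \in \kk$ with $\ad^*(d_pH - Y)\, p = 0$, i.e.\ $p([d_pH - Y, \g]) = 0$. Setting $X := d_pH - Y$ yields $X_\m = d_pH$ and $p([X, \g]) = 0$, which is $(2)$, and conversely. Using the equivariances $d_{\Ad^*(k) p} H = \Ad(k) d_pH$ and $\ad^*(\Ad(k)\xi) = \Ad^*(k)\, \ad^*(\xi)\, \Ad^*(k^{-1})$, this tangency is $\Ad^*(K)$-equivariant and hence propagates from $p$ along its whole orbit; the Hamiltonian trajectory cannot leave the orbit, giving $(3)$.

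Next, for $(1) \Leftrightarrow (2)$, I would compute the horizontal extremal lift of a homogeneous geodesic. Writing $m(t) = \exp(tZ) o$ and lifting as $\tilde g(t) = \exp(tZ) k(t)$ with $k(0) = e$, the horizontality $L_{\tilde g^{-1}} \dot{\tilde g} \in \Delta \subset \m$ uniquely forces $k(t) = \exp(-t Z_\kk)$ and gives control $u(t) = \Ad(\exp(t Z_\kk)) Z_\m$. Using the equivariance of $H$, the curve $p(t) = \Ad^*(\exp(t Z_\kk)) p$ satisfies $d_{p(t)} H = u(t)$, and substituting into the vertical ODE $\dot p = \ad^*(d_pH) p$ produces an identity of the form $\ad^*(X)\, p = 0$ for a specific $X \in \g$ with $X_\m = d_pH$, delivering $(2)$. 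The converse direction runs the same computation in reverse, building the extremal lift and the geodesic vector from any $X$ satisfying $(2)$ (with the caveat that the $\kk$-component of the geodesic vector may differ from the $\kk$-component of $X$ by a sign, which is harmless for the existence statement in $(2)$ since $\kk$ is closed under negation).

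The main technical obstacle is careful sign bookkeeping between the paper's conventions for $\ad^*$ and $\Ad^*$ (as fixed by the ODE $\dot p = \ad^*(d_pH) p$) and the algebraic relation between the witness $X$ in $(2)$ and the geodesic vector $Z$ in $(1)$. Conceptually the lemma is a statement of symplectic reduction: $(3)$ says the Hamiltonian flow descends to a fixed point in the coadjoint-orbit quotient $\kk^\circ / \Ad^*(K)$, and $(2)$ is the infinitesimal manifestation of this stationarity at $p$.
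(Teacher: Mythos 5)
Your equivalence $(2)\Leftrightarrow(3)$ is correct and in one respect cleaner than the paper's: the paper proves $(3)\Rightarrow(2)$ by differentiating $p(t)=(\Ad^*{k(t)})p(0)$ at $t=0$ and obtains $(2)\Rightarrow(3)$ only indirectly through $(1)$, whereas your equivariance argument (the identity $d_{\Ad^*(k)p}H=\Ad(k)\,d_pH$ propagates the tangency $\ad^*(d_pH)p\in\ad^*(\kk)p$ over the whole $\Ad^*{K}$-orbit, so the trajectory of the vertical field cannot leave it) closes that implication directly.

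The gap is in $(1)\Rightarrow(2)$. From the homogeneous geodesic you correctly recover the horizontal lift $\tilde g(t)=\exp{(tZ)}\exp{(-tZ_{\kk})}$ and the control $u(t)=\Ad(\exp{(tZ_{\kk})})Z_{\m}$, but you then take the momentum curve to be $\Ad^*(\exp{(tZ_{\kk})})p$ on the grounds that this candidate satisfies $d_{p(t)}H=u(t)$. That does not identify it with the actual momentum curve: $H(p)=\frac12\B(p|_{\Delta},p|_{\Delta})$ depends only on $p|_{\Delta}$, so $p\mapsto d_pH$ is far from injective and many curves in $\kk^{\circ}$ reproduce the same control. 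The actual momentum curve is the solution of the linear ODE $\dot q=\ad^*(u(t))\,q$ with $q(0)=p$, and asserting that this solution coincides with $\Ad^*(\exp{(tZ_{\kk})})p$ is precisely equivalent to the identity $\ad^*(Z)q(t)=0$ you are trying to establish, so the step is circular. (Differentiating the genuine constraint $d_{q(t)}H=u(t)$ yields only $p([Z,\Delta])=0$, the restriction of condition $(2)$ to $\Delta$, not $p([Z,\g])=0$.) The missing ingredient is the argument the paper makes at the level of the extremal in $T^*G$: the left shifts by $\exp{(tZ)}$ and the right shifts by $K$ lift to symplectomorphisms preserving $\vec{H}$ (Remark~\ref{rem-G-left-K-right-invariance}), so the extremal itself, not merely the extremal curve, is carried along the orbit, $\lambda(t)=L^*_{\exp{(tZ)}^{-1}}R^*_{k(t)^{-1}}p(0)$, and only then does left trivialization give $p(t)=(\Ad^*{k(t)^{-1}})p(0)$. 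The same invariance of $\vec{H}$ is also what makes $(2)\Rightarrow(1)$ work (tangency of the candidate extremal to $\vec{H}$ at $t=0$ propagates to all $t$), so ``running the computation in reverse'' needs it as well.
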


\begin{remark}
\label{rem-fixedpoint}
In particular, any fixed point $p_0 \in \kk^{\circ}$ of the vertical part of the Hamiltonian vector field corresponds to a homogeneous geodesic, since $p_0 \in (\Ad^*{K})p_0$.
\end{remark}

\begin{proof}
(1)$\Rightarrow$(3)
Assume that a geodesic $m : [0, T] \rightarrow M$ with initial momentum $p(0) \in \kk^{\circ}$ is homogeneous, i.e., $m(t) = \exp{(tX)} m(0)$ for some $X \in \g$.
Consider the corresponding extremal $\lambda : [0, T] \rightarrow T^*G$.
Its points are elements of an orbit of the one-parametric subgroup shifted by some elements of the subgroup $K$ acting on the right.
Thus, there exists a curve $k : [0, T] \rightarrow K$ such that
$$
\lambda(t) = L_{\exp{(tX)}^{-1}}^* R_{k(t)^{-1}}^* p(0).
$$
Via left trivialization we have
$$
p(t) = L_{\exp{(tX)}k(t)}^* \lambda(t) = L_{k(t)}^* L_{\exp{(tX)}}^* L_{\exp{(tX)}^{-1}}^* R_{k(t)^{-1}}^* p(0) = (\Ad{k(t)})^* p(0).
$$
So, $p(t) = (\Ad^*{k(t)^{-1}}) p(0)$.
Therefore, the trajectory of the vertical subsystem $p : [0, T] \rightarrow \kk^{\circ}$ lies in an $\Ad^*{K}$-orbit.
\medskip

(3)$\Rightarrow$(2)
Assume that $p(t) = (\Ad^*{k(t)})p(0)$ for some curve $k : [0, T] \rightarrow K$.
Take a derivative $\frac{d}{dt}|_{t=0}$ of this equality.
We get $\dot{p}(0) = -(\ad^*{Z})p(0)$ where $Z = \dot{k}(0) \in \kk$.
But by the first equation of system~\eqref{eq-hamiltoniansystem} we have $\dot{p}(0) = (\ad^*{d_{p(0)}H})p(0)$.
Therefore, $\ad^*{(d_{p(0)}H+Z)} p(0) = p(0) ([X, \g]) = 0$, where $X = d_{p(0)}H + Z \in \g$ and $X_{\m} = d_{p(0)}H$.
\medskip

(2)$\Rightarrow$(1)
Assume that $p(0)([d_pH + Z, \g]) = 0$ for $Z \in \kk$.
This is equivalent to $\dot{p}(0) = - (\ad^*{Z})p(0)$.
Consider the curve $\lambda(t) = R_{\exp{(tZ)}}^* L_{\exp{t(d_pH + Z)}^{-1}}^* p(0) \in T^*G$.
We claim that $\dot{\lambda}(0)$ coincides with the Hamiltonian vector field at the point $\lambda(0)$.
Indeed, first consider the vertical component of the curve $\lambda$
$$
L_{\exp{t(d_pH + Z)}\exp{(-tZ)}}^* \lambda(t) = (\Ad^*{\exp{(tZ)}})p(0).
$$
Its tangent vector at the point $p(0)$ equals ${{d}\over{dt}}|_{t=0} (\Ad^*{\exp{(tZ)}})p(0) = - (\ad^*{Z}) p(0)$.
This is equal to $\dot{p}(0)$ by the assumption.
Second, the horizontal projection of the tangent vector to $\lambda(t)$ for $t=0$ is $d_pH \in \m$.
So, $\dot{\lambda}(0) = \vec{H}(\lambda(0))$.
Recall that the Hamiltonian vector field $\vec{H}$ is invariant under the left action of the group $G$ and
the right action of the group $K$ (Remark~\ref{rem-G-left-K-right-invariance}).
Thus, by definition of the curve $\lambda$, it is a trajectory of the Hamiltonian vector field.
Then the corresponding sub-Riemannian geodesic, i.e., the projection of $\lambda$ to the homogeneous space $G/K$
is an orbit of the one-parametric subgroup $\{\exp{t(d_pH+Z)}\} \subset G$.
\end{proof}

\begin{remark}
\label{rem-toth}
Condition~(2) of Lemma~\ref{lem-sRgl} in other terms first appears in G.\,Z.~T\'{o}th's paper~\cite{toth}.
\end{remark}

\begin{definition}
\label{def-go}
A sub-Riemannian manifold is called \emph{geodesic orbit} if every normal geodesic is an orbit of a one-parametric subgroup of isometries.
\end{definition}

Below we consider several examples of homogeneous geodesics and geodesic orbit Riemannian and sub-Riemannian manifolds.

\begin{example}
\label{ex-compact}
Consider a left-invariant Riemannian structure on a compact Lie group $M$ defined by the Killing form, i.e.,
$\Delta = \m$ and $\B = -\K_{\m}$.
Since the Killing form is left- and right-invariant,
then our Riemannian manifold is $(M\times M)/M$ as a homogeneous space.
Every geodesic is homogeneous by Lemma~\ref{lem-gl}, because for any $X \in \m$ the operator $\ad{X}$ is skew-symmetric.
In Hamiltonian terms of Lemma~\ref{lem-sRgl} the vertical part of the corresponding Pontryagin Hamiltonian vector field is trivial.
\end{example}

\begin{example}
\label{ex-Raxisym}
Consider the Lie group $M$ of proper isometries of a sphere (or a hyperbolic plane).
This group is isomorphic to the Lie group $\SO_3$ or $\PSL_2(\R)$, respectively.
Any left-invariant Riemannian metric is defined by a positive definite quadratic form $\B$ on $\m$.
Let $I_1, I_2, I_3 > 0$ be its eigenvalues and $X_1, X_2, X_3$ be corresponding eigenvectors.
The vertical part of the Pontryagin Hamiltonian system is
just Euler's equation for momenta of a rigid body with a fixed point.

If $I_1 = I_2 \neq I_3$, then the corresponding metric is called \emph{axisymmetric}.
In this case $G = \Isom{M} = M \times \SO_2$, and as a homogeneous space $M = (M \times \SO_2)/\SO_2$,
where the isotropy subgroup is an anti-diagonal subgroup $\SO_2 \subset M \times \SO_2$.
The solution of the vertical part of the Hamiltonian system is (see~\cite{podobryaev-sachkov-so3,podobryaev-sachkov-sl2})
\begin{equation}
\label{eq-axisym-vertpart}
p(t) = (\Ad^*{\kappa  p_3 X_3}) p(0),
\end{equation}
where $p_3$ is the third component of the covector $p$ in the basis dual to the basis $X_1, X_2, X_3$ and
$\kappa$ is a constant depending on the eigenvalues of the form $\B$.
Thus, by Lemma~\ref{lem-sRgl} all geodesics are homogeneous.
See Figure~\ref{pic-axisym} for the phase portrait of the vertical part of the Hamiltonian system on the level surface of the Hamiltonian $H = {{1}\over{2}}$.

\begin{figure}[h]
     \begin{minipage}[h]{0.45\linewidth}
        \center{\includegraphics[width=1\linewidth]{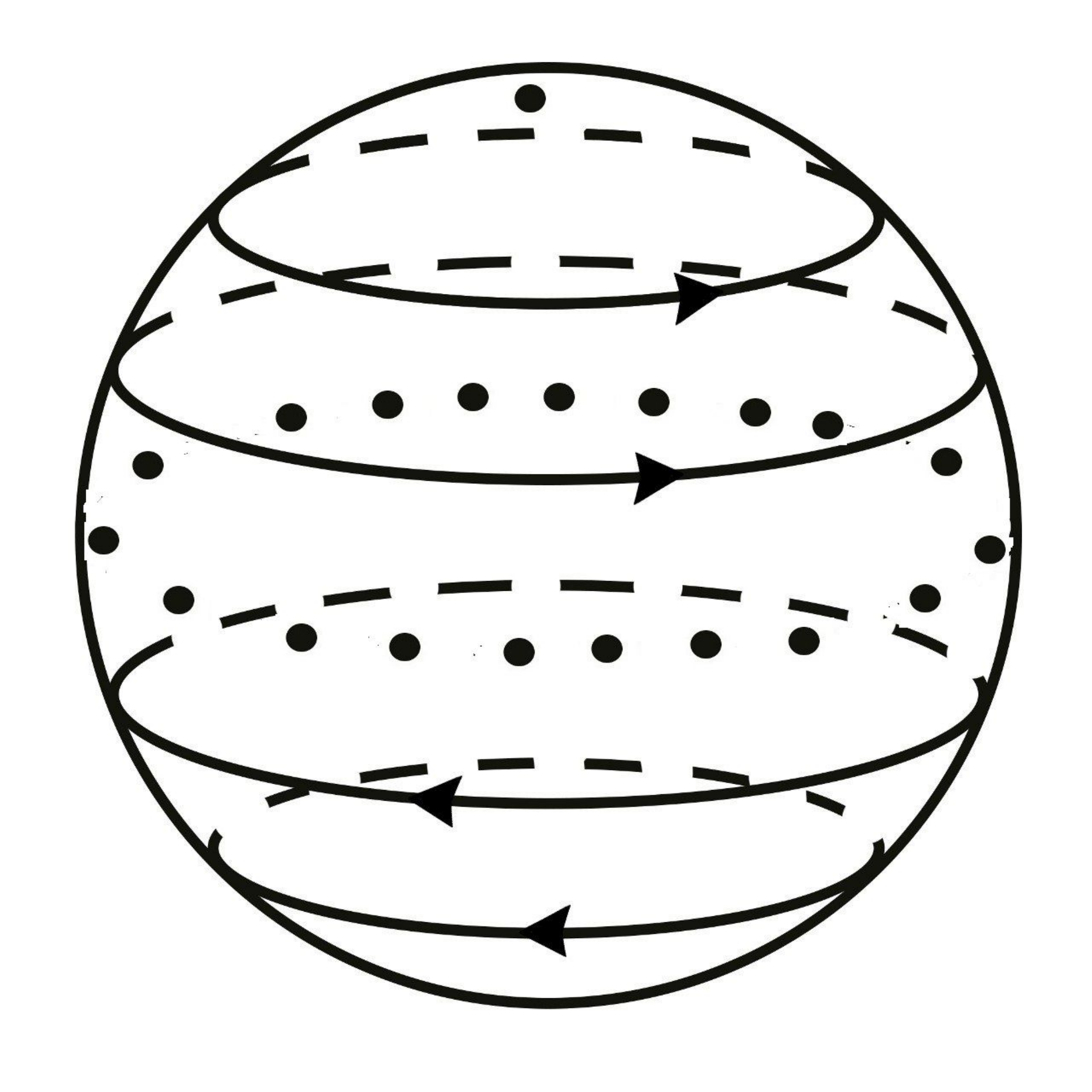}}
        \caption{\label{pic-axisym}Vertical subsystem in axisymmetric case.}
     \end{minipage}
     \hfill
     \begin{minipage}[h]{0.45\linewidth}
        \center{\includegraphics[width=1\linewidth]{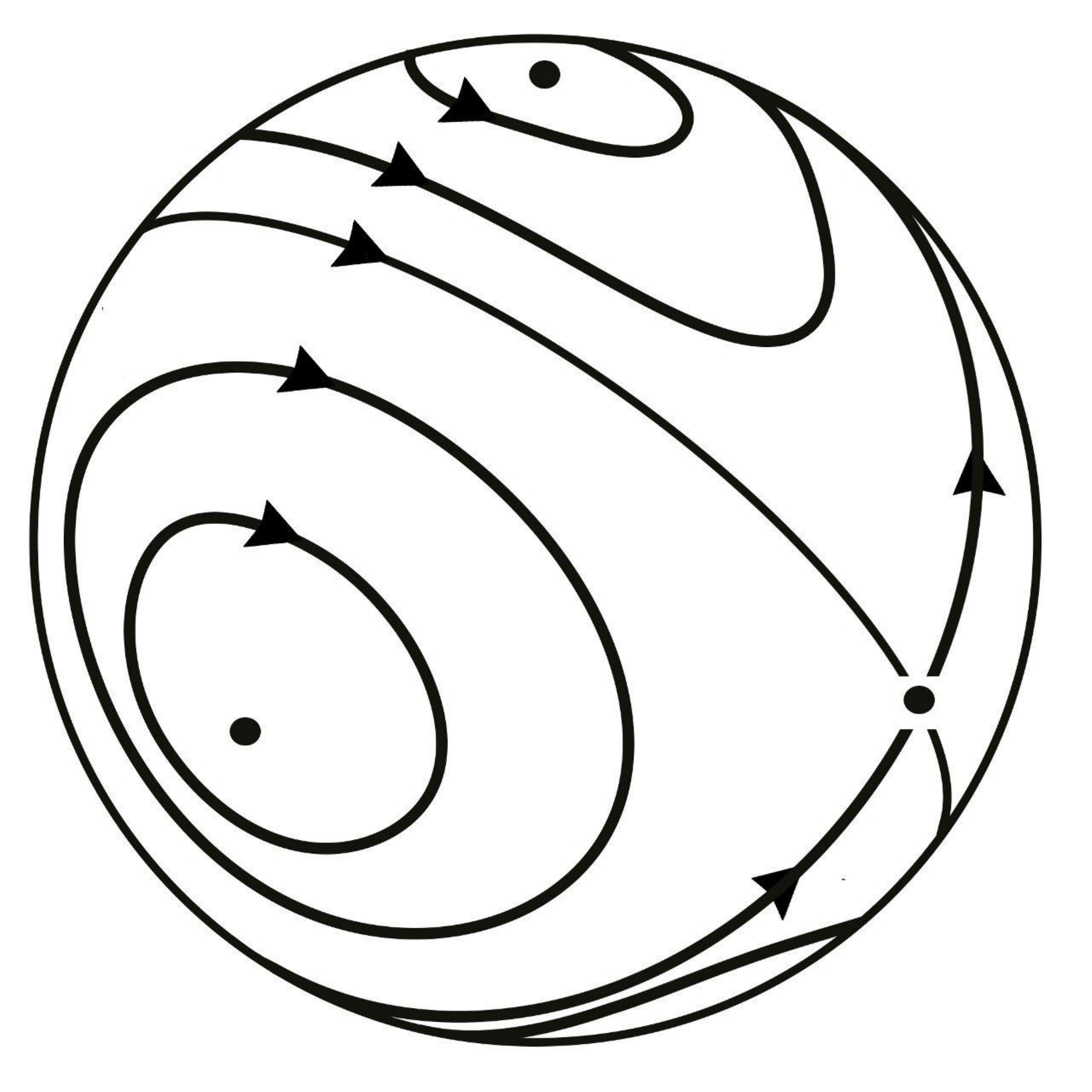}}
        \caption{\label{pic-general}Vertical subsystem in general case.}
     \end{minipage}
\end{figure}

In the general case of all different eigenvalues $I_1, I_2, I_3$ the isometry group is $M \times \Z_3$.
The vertical part of the Hamiltonian system has 6 fixed points (4 stable and 2 unstable ones), see Figure~\ref{pic-general}.
These points are the initial momenta of 6 homogeneous geodesics passing through the point $e$.
\end{example}

\begin{example}
\label{ex-sRaxisym}
Consider now a left-invariant sub-Riemannian structure on the group of proper isometries of a sphere (or a hyperbolic plane), i.e.,
$\SO_3$ or $\PSL_2(\R)$.
The sub-Riemannian distribution is $\Delta = \sspan{\{X_1, X_2\}}$,
where the restriction on $\Delta$ of the Killing form is positive defined in the case of $\PSL_2(\R)$,
and the form $\B$ on $\Delta$ is axisymmetric.
So, we have the sub-Riemannian structures on the homogeneous spaces (as in Example~\ref{ex-Raxisym})
\begin{equation}
\label{eq-weaklysym}
(\SO_3 \times \SO_2) / \SO_2, \qquad (\PSL_2(\R) \times \SO_2) / \SO_2.
\end{equation}
The shortest arcs of these sub-Riemannian structures were described by
V.\,N.~Berestovskii and I.\,A.~Zubareva~\cite{berestovskii-zubareva-so3,berestovskii-psl2}
and earlier U.~Boscain and F.~Rossi~\cite{boscain-rossi} found the cut loci.

The solution of the vertical part of the Pontryagin Hamiltonian system has the same form as in Example~\ref{ex-Raxisym}, see~\eqref{eq-axisym-vertpart}.
The corresponding phase portrait is similar to Figure~\ref{pic-axisym} except that
the level surface of the Hamiltonian is a cylinder instead of a sphere.
Hence, every geodesic is homogeneous.
Note, that it was shown by explicit formulae in papers~\cite{berestovskii-zubareva-so3,berestovskii-psl2}.
\end{example}

\begin{remark}
\label{rem-weaklysym}
The homogeneous spaces~\eqref{eq-weaklysym} are \emph{weakly symmetric}.
This means that for every pair of points there exists an isometry that swaps these points.
This notion was introduced by A.~Selberg and the second space of~\eqref{eq-weaklysym} is his original example.
It is known that Riemannian weakly symmetric spaces are geodesic orbit,
this is a result of J.~Berndt, O.~Kowalski and L.~Vanhecke~\cite{berndt-kowalski-vanhecke},
later V.\,N.~Berestovskii and Yu.\,G.~Nikonorov proved it using another method~\cite{berestovskii-nikonorov}.
For sub-Riemannian spaces this question was stated in paper~\cite{berestovskii-zubareva-sl2},
we give an answer in Section~\ref{sec-go}.
\end{remark}

\begin{remark}
\label{rem-kp}
The second part of Example~\ref{ex-sRaxisym} is a particular case of a left-invariant sub-Riemannian structure defined by the Cartan decomposition of the Lie algebra $\m$
$$
\m = \kk \oplus \pp, \quad \text{where} \quad [\kk, \kk] \subset \kk, \quad [\kk, \pp] \subset \pp, \qquad [\pp, \pp] \subset \kk.
$$
The sub-Riemannian structure is defined by the distribution $\Delta = \pp$ and the restriction of the Killing form to $\Delta$.
A.\,A.~Agrachev~\cite{agrachev-kp}, R.\,W.~Brockett~\cite{brockett} and V.~Jurdjevic~\cite{jurdjevic} noted that geodesics of this structure are products of two one-parametric subgroups
\begin{equation}
\label{eq-productof2one-parametricsubgroups}
\exp{t(X+Z)} \exp{(-tZ)}, \quad \text{where} \quad X \in \pp, \quad Z \in \kk.
\end{equation}
This means that this manifold is geodesic orbit, since the isometry group is $G = M \times K$,
and~\eqref{eq-productof2one-parametricsubgroups} is an orbit of a one-parametric subgroup of $G$.
\end{remark}

\begin{example}
Consider the optimal control problem for rolling of a sphere on a plane with twisting but without slipping.
I.\,Yu.~Beschastnyi~\cite{beschastnyi} studied it as a sub-Riemannian problem on the Lie group $M = \SO_3 \times \R^2$.

Let $V_1, V_2, V_3$ be an orthonormal basis of the Lie algebra $\so_3$ with respect to the Killing form,
and $e_1, e_2$ be a basis of the plane $\R^2$.
The distribution is defined by the orthonormal frame $e_1 - V_2, e_2 + V_1, V_3$.
Below we will identify the Lie algebra $\m$ with the dual space $\m^*$ via the basis $V_1, V_2, V_3, e_1, e_2$.
The vertical subsystem of Pontryagin's Hamiltonian system is as follows~\cite{beschastnyi}:
\begin{equation}
\label{eq-verticalballplane}
\left\{
\begin{array}{lll}
\dot{\varrho} & = & [\hat{p}, \varrho], \\
\dot{p} & = & 0, \\
\end{array}
\right.
\qquad
\begin{array}{l}
\varrho \in \so_3, \\
p \in \R^2, \\
\end{array}
\end{equation}
where $\hat{p} \in \so_3$ is the tangent vector to the one-parametric group of rotations around the vector $(p_1, p_2, 0) \in \R^3$ (here $p_1, p_2$ are the components of the vector $p$).

I.\,Yu.~Beschastnyi~\cite{beschastnyi} obtained geodesics as products of two one-parametric subgroups
$$
m(t) = \exp{t(\hat{p} + \varrho(0))} \exp{(-t\hat{p})}.
$$
But despite this, and also despite the fact that
the rotations $\exp{t\hat{\varrho}}$ are symmetries of system~\eqref{eq-verticalballplane},
this sub-Riemannian structure is not geodesic orbit.

Let us prove this by contradiction.
Assume that there is an isotropy group $K$ acting on $\m$ by conjugation.
By Lemma~\ref{lem-sRgl}, since any automorphism of the Lie algebra $\so_3$ is inner,
there exists $X \in \m$ such that $\dot{\varrho} = [X, \varrho]$ and $\dot{\hat{p}} = [X, \hat{p}]$.

But from~\eqref{eq-verticalballplane} we know that $\dot{\varrho} = [\hat{p}, \varrho]$ and $\dot{p} = 0$.
Thus, there are three possible cases.

1. If $p = 0$, then we can take $X = 0$.

2. If $p \neq 0$ and $\varrho$, $\hat{p}$ are collinear, then we can take $X = 0$ as well.

3. If $p \neq 0$ and $\varrho$, $\hat{p}$ are not collinear, then $X = \hat{p}$.

In the first two cases the geodesic with the initial momentum $(\varrho, p)$ is homogeneous,
since vertical subsystem~\eqref{eq-verticalballplane} is trivial.
In the third case we get that $\exp{X}$ is a rotation around a horizontal axis.
Hence, $\exp{X}$ does not preserve the distribution $\Delta$ in contradiction with the assumption that $\exp{X}$ is an isotropy.

Notice that in the first case the corresponding geodesic is a rotation of the sphere around vertical axis at a fixed point.
The second case corresponds to a rolling of the sphere along a straight line
(this is a non strict abnormal geodesic~\cite{beschastnyi}).
\end{example}

\section{\label{sec-nilpotent}Nilpotent geodesic orbit sub-Riemannian manifolds}

Nilpotent Lie groups play an important role in sub-Riemannian geometry because of existence of nilpotent approximation~\cite{agrachev-sarychev}.
In this section we show that left-invariant sub-Riemannian structures on the two step free nilpotent Lie groups are geodesic orbit
and prove that any left-invariant sub-Riemannian structure on a Carnot group of step more than $2$ cannot be geodesic orbit.

\begin{definition}
\label{def-Carnot}
A Lie algebra $\g$ is called \emph{a Carnot algebra of step $s$} if there is a decomposition
$$
\g = \g_1 \oplus \dots \oplus \g_s, \qquad [\g_1, \g_i] = \g_{i+1} \quad \text{for} \quad i=1,\dots,s, \qquad [\g_1, \g_s] = 0,
$$
such that $\g_1$ generates the Lie algebra $\g$.
The corresponding connected and simply connected nilpotent Lie group is called \emph{a Carnot group}.
The left shifts of the subspace $\g_1$ give a non-holonomic distribution on a Carnot group.
Any scalar product on $\g_1$ gives rise to a sub-Riemannian structure.
\emph{The rank of sub-Riemannian structure on a Carnot group} is equal to the dimension of the subspace $\g_1$.
\end{definition}

\begin{proposition}
\label{prop-step2}
A left-invariant sub-Riemannian structure on a Carnot group of step $2$ is geodesic orbit.
\end{proposition}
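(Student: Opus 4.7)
The plan is to invoke Lemma~\ref{lem-sRgl}, criterion~(2): for any initial momentum $p\in\kk^{\circ}$, produce an explicit element $X\in\g$ with $X_{\m}=d_{p}H$ and $p([X,\g])=0$. First, realize the isometry group as the semi-direct product $G=N\rtimes K$, where $K$ is the stabilizer of the identity consisting of graded Lie-algebra automorphisms of $\n=\n_{1}\oplus\n_{2}$ that preserve $\B|_{\n_{1}}$; then the reductive complement is $\m=\n$, and $\kk^{\circ}\cong\n^{*}$. Split $p=q_{1}+q_{2}$ with $q_{i}\in\n_{i}^{*}$, and compute $d_{p}H=\B^{-1}(q_{1})\in\n_{1}$.

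Associate to $q_{2}$ the $\B$-skew-symmetric operator $A\in\so(\n_{1},\B)$ defined by $\B(Au,v)=q_{2}([u,v])$. The step-$2$ assumption enters here: $\n_{2}$ is central in $\n$ and $\n_{2}=[\n_{1},\n_{1}]$, so the Leibniz prescription $Z([u,v]):=[Au,v]+[u,Av]$ produces a graded derivation $Z$ of $\n$ extending $A$, and its skew-symmetry on $\n_{1}$ places $Z$ in $\kk$. I propose $X:=d_{p}H+Z\in\g$ as the geodesic vector.

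Verification of $p([X,Y])=0$ splits across $\g=\n_{1}\oplus\n_{2}\oplus\kk$. For $W\in\kk$: $[X,W]=-W(d_{p}H)+[Z,W]$, and $p(W\,d_{p}H)=\B(d_{p}H,W\,d_{p}H)=0$ by skew-symmetry of $W|_{\n_{1}}$. For $Y_{1}\in\n_{1}$: $[d_{p}H,Y_{1}]\in\n_{2}$ and $Z(Y_{1})\in\n_{1}$, so $p([X,Y_{1}])=\B(A\,d_{p}H,Y_{1})+\B(d_{p}H,AY_{1})$, which vanishes by the defining skew-symmetry of $A$. For $Y_{2}\in\n_{2}$: step~$2$ forces $[d_{p}H,Y_{2}]=0$, leaving $q_{2}(ZY_{2})$; writing $Y_{2}=[u,v]$ and applying Leibniz, this is $\B(A^{2}u,v)+\B(Au,Av)$, which vanishes because the skew-symmetry of $A$ gives $\B(A^{2}u,v)=-\B(Au,Av)$.

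The delicate step I expect is the well-definedness of the Leibniz extension $Z|_{\n_{2}}$: one must verify that the induced operator on $\Lambda^{2}\n_{1}$ sending $u\wedge v$ to $Au\wedge v+u\wedge Av$ preserves the kernel of the bracket $\Lambda^{2}\n_{1}\to\n_{2}$. In the free step-$2$ case this kernel is zero and the check is vacuous; in general, compatibility has to be extracted from the fact that $A$ is itself built from $q_{2}$ through the same bracket. Once $Z\in\kk$ is in hand, criterion~(2) of Lemma~\ref{lem-sRgl} applies and every normal geodesic is homogeneous.
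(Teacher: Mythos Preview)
For the free step-$2$ Carnot group your argument and the paper's coincide: your skew operator $A$ is exactly the paper's $\varrho\in\so(V)$, the Leibniz extension is vacuous because $\n_2=\Lambda^2\n_1$ has no bracket relations, and your three verifications amount to the paper's check that the vertical flow is tangent to the $\SO(V)$-orbits. The divergence is in the passage to non-free groups. The paper does \emph{not} try to build $Z\in\kk$ directly on the quotient; instead it lifts the momentum to $\ii^{\circ}$ inside the free algebra, applies the free case there, and asserts that homogeneity descends along the Lie-group quotient $G_{\text{free}}\to\widehat G=G_{\text{free}}/\exp\ii$.

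The ``delicate step'' you flag is a genuine obstruction, not a formality that unwinds. Take $\n_1=\R^3$, $\n_2=\R^2$ with $[e_1,e_2]=f_1$, $[e_1,e_3]=f_2$, $[e_2,e_3]=0$, and $\B$ the standard inner product. For $q_2=af_1^{*}+bf_2^{*}$ your prescription gives $Ae_1=ae_2+be_3$, $Ae_2=-ae_1$, $Ae_3=-be_1$, and Leibniz applied to the single relation $[e_2,e_3]=0$ produces $[Ae_2,e_3]+[e_2,Ae_3]=bf_1-af_2$, which is nonzero for generic $(a,b)$. So $A$ does \emph{not} extend to a derivation of $\n$, and your $Z$ is not an element of $\kk$. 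In this example $\kk$ is only one-dimensional (the rotations of $\sspan\{e_2,e_3\}$, acting the same way on $\sspan\{f_1,f_2\}$), and one checks directly that for generic $p$ no element of this $\kk$ satisfies criterion~(2) of Lemma~\ref{lem-sRgl}: testing on $\n_2$ forces the $\kk$-component to vanish, after which the test on $e_1$ reads $ax_2+bx_3=0$. The heuristic that ``$A$ is built from $q_2$ through the same bracket'' does not force the kernel of $\Lambda^2\n_1\to\n_2$ to be $A$-invariant.

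It is worth noting that the paper's descent step runs into the very same obstacle in disguise: in the example above the geodesic vector $\varrho\in\so(V)$ found on the free group does not preserve the ideal $\ii=\sspan\{e_2\wedge e_3\}$, so the one-parameter subgroup of $\Isom(G_{\text{free}})$ does not obviously induce an isometry of $\widehat G$. The paper's route sidesteps your well-definedness issue upstairs but then has to justify why the homogeneous structure survives the quotient, and that justification is not spelled out.
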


\begin{proof}
Consider the case of a free Carnot group of step $2$ and rank $r$ and the following model of such group.
Let $M = V \times \Lambda^2 V$, where $\dim{V} = r$.
Endow the set $M$ with a product rule as follows:
$$
(x_1, \omega_1) \cdot (x_2, \omega_2) = (x_1 + x_2, \, \omega_1 + \omega_2 + x_1 \wedge x_2),
\quad \text{for} \quad
x_1, x_2 \in V, \quad \omega_1, \omega_2 \in \Lambda^2 V.
$$
The corresponding tangent algebra is $\m = V \oplus \Lambda^2 V$ and the sub-Riemannian distribution is defined by the subspace $\Delta = V \oplus 0$.

The vertical part of the Pontryagin Hamiltonian system for normal geodesics reads as (see~\cite{rizzi-serres})
$$
\dot{p} = \varrho p, \quad \dot{\varrho} = 0, \quad \text{where} \quad
(p,\varrho) \in \m^* = V^* \oplus \Lambda^2 V^* = V^* \oplus \so(V).
$$

V.~Kivioja and E.~Le\,Donne~\cite{kivioja-ledonne} proved that $\Isom{M} = M \leftthreetimes \SO(V)$,
where the group $M$ acts by left shifts and the group $\SO(V)$ acts on $V$ tautologically and on $\Lambda^2 V \cong \so(V)$ by conjugations.
The isotropy subgroup of the identity point $(0, 0) \in M$ is $\SO(V)$.
The coadjoint action of $\SO(V)$ on $\m$ is tautological on the first component and conjugate on the second one
(due to the isomorphism of $V$ and $V^*$ given by the scalar product of the sub-Riemannian structure).
It is easy to see that the trajectories of the vertical part of the Hamiltonian system are tangent to the orbits of the group $\SO(V)$ in $\m^*$,
i.e., for any $(p,\varrho) \in \m^*$ there exists $X \in \so(V)$ such that
\begin{align*}
& \dot{p} = \varrho p = [X,\varrho],\\
& \dot{\varrho} = 0 = X\varrho.\\
\end{align*}
Indeed, we can take $X = \varrho$.
So, by Lemma~\ref{lem-sRgl} any normal geodesic of a free two-step Carnot group is homogeneous.

Any two-step nilpotent Lie algebra $\widehat{\g}$ is a factor of a free nilpotent Lie algebra $\g$ of the same rank
by an ideal $\ii \vartriangleleft \g$ laying at the second layer of the Lie algebra $\ii \subset \g_2$.
Since $\widehat{\g}^* \cong \ii^{\circ} \subset \g^*$,
we obtain that any geodesic $\widehat{\gamma}$ on the Carnot group $\widehat{G}$ can be lifted to
a geodesic $\gamma$ on the free Carnot group $G$ with an initial momentum from the subspace $\ii^{\circ}$.
It follows from Lemma~\ref{lem-sRgl} that if this lifted geodesic $\gamma$ is homogeneous,
then the geodesic $\widehat{\gamma}$ is homogeneous as well.
\end{proof}

\begin{remark}
\label{rem-step2}
The sub-Riemannian problems for free Carnot groups of step $2$ and small ranks are completely solved.
For rank $2$ the corresponding sub-Riemannian problem is equivalent to the classical variational Dido problem~\cite{sachkov-textbook},
the complete discovering of this sub-Riemannian structure was made by A.\,M.~Vershik and V.\,Ya.~Gershkovich~\cite{vershik-gershkovich}.
For rank $3$ O.~Myasnichenko~\cite{myasnichenko} constructed the cut locus and
A.~Montanari, D.~Morbidelli~\cite{montanari-morbidelli} derived the cut time and described singularities of the distance function.
L.~Rizzi, U.~Serres obtained some upper bounds for the cut time for ranks more than $3$~\cite{rizzi-serres}.
H.-Q.~Li and Ye~Zhang in recent preprints~\cite{li,li-zhang} introduced formulae for sub-Riemannian distance for Heisenberg type groups.
\end{remark}

\begin{remark}
\label{rem-Euclid}
Of course, Carnot groups of step $1$ (i.e., commutative groups) are geodesic orbit since the corresponding sub-Riemannian geometry is just Euclidian geometry.
\end{remark}

It turns out that sub-Riemannian structures on Carnot groups of step more than $2$ cannot be geodesic orbit.
This is a generalization of C.\,S.~Gordon's result~\cite{gordon} obtained for nilpotent Riemannian manifolds.

\begin{theorem}
\label{th-gonilp}
If a left-invariant sub-Riemannian structure on a Carnot group is geodesic orbit,
then its step equals $1$ or $2$.
\end{theorem}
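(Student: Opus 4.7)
The plan is to argue by contradiction: assuming the Carnot group $G$ has step $s\geq 3$ and is geodesic orbit, I will exhibit an initial momentum whose Hamiltonian trajectory escapes every $\Ad^* K$-orbit, violating criterion~(3) of Lemma~\ref{lem-sRgl}. The crucial structural input is that for a left-invariant sub-Riemannian structure on a Carnot group one has $\Isom{G}=G\leftthreetimes K$, where $K$ is a compact group of automorphisms of $\g$ preserving $\g_1$ together with $\B|_{\g_1}$ (the Kivioja--Le~Donne description used in Proposition~\ref{prop-step2}). Any such automorphism is automatically \emph{graded}, since $\g_{j+1}=[\g_1,\g_j]$ forces $K(\g_j)=\g_j$ by induction; in particular $K$ acts linearly on each $\g_j^*$, so the $K$-orbit of the zero covector in $\g_j^*$ is $\{0\}$.

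Using this, the construction is short. Because $s\geq 3$, we have $\g_3=[\g_1,\g_2]\neq 0$, so one can pick $X\in\g_1$ and $W\in\g_2$ with $T:=[X,W]\neq 0$. Define $p\in\g^*$ by $p|_{\g_1}=\B(X,\cdot)$ (equivalently $d_pH=X$), $p|_{\g_2}=0$, $p|_{\g_3}$ an arbitrary covector satisfying $p|_{\g_3}(T)=1$, and $p|_{\g_j}=0$ for $j\geq 4$. The vertical equation $\dot p(Y)=p([d_pH,Y])$ at $t=0$, combined with $[X,\g_j]\subset\g_{j+1}$, reduces to $\dot p|_{\g_j}(Y)=p|_{\g_{j+1}}([X,Y])$, which vanishes for every index $j$ by construction, except for $j=2$; for that single layer one computes $\dot p|_{\g_2}(W)=p|_{\g_3}(T)=1\neq 0$.

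Consequently $p(t)|_{\g_2}=t\beta+O(t^2)$ with $\beta\in\g_2^*$ satisfying $\beta(W)=1$, so $p(t)|_{\g_2}\neq 0$ for all small $t\neq 0$ while $p(0)|_{\g_2}=0$. If $p(t)$ lay in the $\Ad^* K$-orbit of $p(0)$, its projection to $\g_2^*$ would lie in $K\cdot 0=\{0\}$, which fails; hence by Lemma~\ref{lem-sRgl}(3) the geodesic with initial momentum $p$ is not homogeneous, contradicting the assumption. Together with Remark~\ref{rem-Euclid} and Proposition~\ref{prop-step2} this yields the theorem.

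The main obstacle I expect is not the flow computation, which is essentially one line once the strata are separated and $p$ is chosen to be supported on $\g_1\oplus\g_3$, but rather the verification of the isotropy structure: that $K\subset\Isom{G}$ consists of Lie algebra automorphisms of $\g$ that preserve the first stratum. The former is the rigidity statement for isometries of Carnot groups relying on the metric realization of the stratification (the Kivioja--Le~Donne result already invoked in the step-$2$ case), and the latter then follows immediately since $\g_1$ is the value of the distribution at the identity and the grading is generated from it.
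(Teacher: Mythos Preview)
Your proof is correct and takes a genuinely different route from the paper's. The paper works through condition~(2) of Lemma~\ref{lem-sRgl}: it proves that on \emph{any} geodesic orbit sub-Riemannian homogeneous space, for each $X\in\Delta$ the operator $\pi_{\Delta^{\bot}}\circ(\ad X)|_{\Delta^{\bot}}$ is skew-symmetric with respect to an $\Ad K$-invariant extension $\widehat{\B}$ of $\B$; specializing to a Carnot group with $\Delta^{\bot}=[\n,\n]$, this operator is also nilpotent, hence zero, forcing $[\n,[\n,\n]]=0$. You instead use condition~(3) directly, producing a single explicit momentum supported on $\g_1\oplus\g_3$ whose vertical flow immediately pushes the $\g_2^*$-component off zero, while the graded $K$-action (which you correctly justify via Kivioja--Le~Donne plus the inductive relation $\g_{j+1}=[\g_1,\g_j]$) must fix that zero component. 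Your approach is shorter and avoids extending the metric; the paper's approach, in exchange, isolates a structural fact valid for arbitrary geodesic orbit manifolds that may be of independent use. One minor remark: the closing references to Proposition~\ref{prop-step2} and Remark~\ref{rem-Euclid} are superfluous, since the theorem only asserts the forward implication, which your contradiction already delivers.
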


\begin{proof}
Let us begin with considering a more general situation.
Assume that $G$ is a group of isometries acting transitively on a sub-Riemannian geodesic orbit manifold $M$ and
$K$ is an isotropy subgroup.
Consider a reductive decomposition $\g = \m \oplus \kk$.
The sub-Riemannian distribution is generated by the subspace $\Delta \subset \m$.
This subspace is $\Ad{K}$-invariant.
Let $\Delta^{\bot} \subset \m$ be its $\Ad{K}$-invariant complement.
Continue the non-degenerate quadratic form $\B$ on $\Delta$ (that defines the sub-Riemannian structure)
to a non-degenerate quadratic form $\widehat{\B}$ on $\m$ such way that
$\m = \Delta \oplus \Delta^{\bot}$ is the orthogonal decomposition with respect to the form $\widehat{\B}$ and
the form $\widehat{\B}$ is $\Ad{K}$-invariant.

Following~\cite{gordon} we claim that for any $X \in \Delta$ the operator $\pi_{\Delta^{\bot}} (\ad{X})$ is skew-symmetric on the subspace $\Delta^{\bot}$
with respect to the form $\widehat{\B}$,
where $\pi_{\Delta^{\bot}}: \m \rightarrow \Delta^{\bot}$ is the orthogonal projection.

Identify the tangent space $\m$ with the dual space $\m^*$ by the map $\widehat{\B} : \m \rightarrow \m^*$, where $\m \ni \xi \mapsto \widehat{\B}(\xi, \,\cdot\,)$.
Since the sub-Riemannian structure is geodesic orbit, applying Lemma~\ref{lem-sRgl}, we get that
for any $p = \widehat{\B}(X + Y) \in \kk^{\circ}$, where $X \in \Delta$ and $Y \in \Delta^{\bot}$,
there exists $Z \in \kk$ such that
$$
p([d_pH + Z, \g]) = \widehat{\B}(X+Y, [d_pH + Z, \g]) = \widehat{\B}(X+Y, [X + Z, \g]) = 0,
$$
since $H(p) = {{1}\over{2}}\widehat{\B}(X,X)$ and $d_pH = X$ in the sub-Riemannian case.

This is equivalent to
$$
\widehat{\B}(X+Y, [X + Z, \g]) = \widehat{\B}(X, [X + Z, \g]) + \widehat{\B}(Y, [X + Z, \g]) = 0.
$$
In particular,
\begin{equation}
\label{eq-nilp}
\widehat{\B}(X, [X + Z, Y]) + \widehat{\B}(Y, [X + Z, Y]) = 0.
\end{equation}

Let us show that the first term of equation~\eqref{eq-nilp} equals zero.
Indeed, since our manifold is geodesic orbit, Lemma~\ref{lem-sRgl} is satisfied for $p = \widehat{\B}(X)$, i.e.,
there exists $Z_X \in \kk$ such that
$$
\widehat{\B}(X, [X + Z_X, \g]) = \widehat{\B}(X, [X + Z, \g]) + \widehat{\B}(X, [Z_X - Z, \g]) = 0.
$$
In particular,
\begin{equation}
\label{eq-nilpsec}
\widehat{\B}(X, [X + Z, Y]) + \widehat{\B}(X, [Z_X - Z, Y]) = 0.
\end{equation}
But $\m = \Delta \oplus \Delta^{\bot}$ is $\Ad{K}$-invariant decomposition.
It follows that $[Z_X - Z, Y] \in \Delta^{\bot}$ since $Z_X - Z \in \kk$.
So, the second term of equation~\eqref{eq-nilpsec} equals zero, since $X \in \Delta$ and
$\Delta$ is orthogonal to $\Delta^{\bot}$.
We obtain $\widehat{\B}(X, [X + Z, Y]) = 0$, i.e.,
the first term of equation~\eqref{eq-nilp} is zero.

It follows that the second term of equation~\eqref{eq-nilp} is zero as well:
\begin{equation}
\label{eq-nilpfinal}
\widehat{\B}(Y, [X + Z, Y]) = \widehat{\B}(Y, [X, Y]) + \widehat{\B}(Y, [Z, Y]) = 0.
\end{equation}
The second summand in equation~\eqref{eq-nilpfinal} equals zero, because the operator $\ad{Z}$ is skew-symmetric for $Z \in \kk$.
We get
$$
\widehat{\B}(Y, [X, Y]) = \widehat{\B}(Y, [X, Y]_{\Delta^{\bot}}) = \widehat{\B}(Y, \pi_{\Delta^{\bot}} (\ad{X})Y) = 0.
$$
This means that the operator $\pi_{\Delta^{\bot}} (\ad{X})$ is skew-symmetric as well.

Finally, by theorem of V.~Kivioja and E.~Le\,Donne~\cite{kivioja-ledonne} if $N$ is a nilpotent Lie group with a left-invariant sub-Riemannian structure,
then $G = N \leftthreetimes K$, where $K$ is an isotropy subgroup.
We have an $\Ad{K}$-invariant decomposition $\n = \Delta \oplus [\n, \n]$.
We proved that for any $X \in \Delta$ the operator $\pi_{[\n, \n]} (\ad{X})$ is skew-symmetric.
On the other hand, this operator is nilpotent.
It follows that it is equal to zero.
Since $\Delta$ generates the Lie algebra $\n$, we obtain $[\n, [\n, \n]] = 0$.
So, the step of our Carnot group is less than $3$.
\end{proof}

\begin{example}
\label{ex-cartan}
Consider \emph{the Cartan group}, i.e., the free Carnot group of rank~2 and step~3.
The corresponding Carnot algebra $\g = \g_1 \oplus \g_2 \oplus \g_3$,
where $\g_1 = \sspan{\{X_1, X_2\}}$, $\g_2 = \sspan{\{X_3\}}$, $\g_3 = \sspan{\{X_4, X_5\}}$,
has the following non-zero commutators of the basis elements:
$$
[X_1, X_2] = X_3, \qquad [X_1, X_3] = X_4, \qquad [X_2, X_3] = X_5.
$$
The isotropy subgroup of the group of isometries is $\SO_2$
acting tautologically on $\g_1$, $\g_3$ and trivially on $\g_2$~\cite{sachkov-didona}.
There are $3$ independent Casimir functions:
$$
\frac{1}{2}h_3^2 + h_1h_5 - h_2h_4, \qquad h_4, \qquad h_5,
$$
where $h_i = \langle X_i, \, \cdot \, \rangle$ for $i=1,\dots,5$ are linear functions on $\g^*$.
These functions together with the Hamiltonian $H = {{1}\over{2}}(h_1^2 + h_2^2)$ from a system of $4$ first integrals 
of the vector field $\vec{H}$ in the $5$-dimensional space $\g^*$.
Consider trajectories of the vertical part of the Hamiltonian vector field.
In the case of $h_4 = h_5 = 0$ the corresponding trajectories are the same as for \emph{the Heisenberg group},
i.e., the free Carnot group of rank~2 and step~2.
In the case $h_4^2 + h_5^2 \neq 0$ the corresponding trajectories are intersections of two quadrics.
Hence, by Lemma~\ref{lem-sRgl} the only homogeneous geodesics are geodesics with initial momenta such that $h_4 = h_5 = 0$.
\end{example}

\begin{remark}
\label{rem-hg-on-carnot-groups}
It would be interesting to find all the homogeneous geodesics for Carnot groups.
We can just say that there are some nontrivial homogeneous geodesics, i.e., geodesics corresponding to nontrivial trajectories of the vertical part of the Hamiltonian vector field $\Hv$.
Note, that for a free Carnot group the isotropy subgroup contains $\SO(\g_1)$.
Indeed, the elements of the group $\SO(\g_1)$ are symmetries of the vector field $\Hv$ on $\g^*$,
these symmetries induce symmetries of the exponential map that are isometries~\cite{podobryaev-sym}.
For a free Carnot group there exist homogeneous geodesics that appear from the free Carnot group of the same rank and lower step due to the natural factorization. Besides this, for step~3 free Carnot groups there exist series of two-dimensional coadjoint orbits
that are organized as coadjoint orbits of the Heisenberg group~\cite{podobryaev2}.
So, geodesics with initial momenta that lie in this kind of coadjoint orbits are homogeneous by Lemma~\ref{lem-sRgl}.
\end{remark}

\section{\label{sec-go}Some general properties of\\ geodesic orbit sub-Riemannian manifolds}

Here we prove some general facts about geodesic orbit sub-Riemannian manifolds and obtain a series of important examples.
First we get the following criterion.

\begin{proposition}
\label{prop-gocriterion}
The homogeneous sub-Riemannian space $M = G/K$ is geodesic orbit if and only if
$$
\{H, \R[\m^*]^K\} = 0,
$$
where $H$ is the normal Pontryagin Hamiltonian,
$\R[\m^*]^K$ is the algebra of left-invariant polynomial functions on $T^*M$
\emph{(}i.e., the algebra of the left shifts of $\Ad^*{K}$-invariant polynomial functions on $\m^*$\emph{)} and
$\{\,\cdot\, , \,\cdot\, \}$ is the canonical Poisson structure on $T^*M$.
\end{proposition}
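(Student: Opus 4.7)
The plan is to translate both sides of the stated equivalence into the same condition on trajectories of the vertical Hamiltonian subsystem, using Lemma~\ref{lem-sRgl}(3) as a bridge.

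First I would set up the identifications. Via the left trivialization of $T^*G$ and the natural isomorphism $T^*M \cong G \times_K \m^*$, the left-invariant (i.e., $G$-invariant) functions on $T^*M$ correspond to $\Ad^*K$-invariant functions on $\m^* \cong \kk^{\circ}$. Both $H$ and every $f \in \R[\m^*]^K$ belong to this class, so the Poisson bracket $\{H, f\}$ is itself $G$-invariant and is fully determined by its restriction to the fiber $T_o^*M = \m^*$. On this fiber the Hamiltonian flow reduces to the vertical part of Pontryagin's Hamiltonian system~\eqref{eq-hamiltoniansystem}, and since $f$ depends only on the body-frame momentum, $\{H,f\}(p)$ equals the derivative of $f$ along the vertical trajectory through $p$. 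Thus the hypothesis $\{H, \R[\m^*]^K\} = 0$ is exactly the assertion that every $\Ad^*K$-invariant polynomial on $\m^*$ is a first integral of the vertical subsystem.

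Next I would apply Lemma~\ref{lem-sRgl}(3): the manifold is geodesic orbit if and only if every trajectory of the vertical subsystem starting at a point $p \in \kk^{\circ}$ remains in the single coadjoint orbit $(\Ad^*K) p$. The implication $(\Rightarrow)$ is then immediate, since along such a trajectory every $\Ad^*K$-invariant function is constant, giving $\{H, f\} = 0$ for all $f \in \R[\m^*]^K$.

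The converse requires that $\Ad^*K$-orbits on $\m^*$ be separated by their $\Ad^*K$-invariant polynomial values, and this is the step I expect to be the main subtlety. The isotropy subgroup $K$ of a transitive group of isometries of an equiregular sub-Riemannian manifold is a compact Lie group (a consequence of the theorem of Capogna and Le~Donne cited in the Introduction, together with the assumed effectiveness of the $G$-action and the rigidity of sub-Riemannian isometries at a point). For a compact linear group the algebra of polynomial invariants separates orbits by a classical theorem of Hilbert and Weyl. Given this separation, constancy of every invariant polynomial along $p(t)$ forces the whole trajectory to lie in a single $\Ad^*K$-orbit, and Lemma~\ref{lem-sRgl} then delivers the homogeneity of the corresponding geodesic.
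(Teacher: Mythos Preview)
Your proof is correct and follows the same route as the paper: reduce $\{H,f\}$ on $T^*M$ to the derivative of $f|_{\m^*}$ along the vertical subsystem, then invoke Lemma~\ref{lem-sRgl}(3). You are in fact more careful than the paper about the converse direction: the paper simply asserts that tangency of $\vec{H}_{\mathrm{vert}}$ to $\Ad^*K$-orbits is equivalent to the vanishing of $\vec{H}_{\mathrm{vert}}$ on all $\Ad^*K$-invariant functions, whereas you correctly isolate the need for polynomial invariants to separate orbits and justify it via compactness of $K$.
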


\begin{proof}
Let $F \in \R[\m^*]^K$ be a left-invariant function on $T^*M$.
It is well known that
$$
\{H, F\}|_{\m^*} = \{H|_{\m^*}, F|_{\m^*}\} = \vec{H}_{\mathrm{vert}}(F|_{\m^*}),
$$
where the second bracket is the canonical Poisson bracket on $\g^* \supset \kk^{\circ} \cong \m^*$ and
$\vec{H}_{\mathrm{vert}}$ is the vertical part of the Hamiltonian vector field that coincides
with the Hamiltonian vector field of the function $H|_{\m^*}$ on $\m^*$.
Thus, $\{H, F\} = 0$ if and only if $\vec{H}_{\mathrm{vert}}(F|_{\m^*}) = 0$.

It follows from Lemma~\ref{lem-sRgl} that the sub-Riemannian manifold $M$ is geodesic orbit if and only if
any trajectory of the vertical part of the normal Pontryagin Hamiltonian system is tangent to $\Ad^*{K}$-orbits in $\m^*$.
It is equivalent to the following fact:
any function that is constant on $\Ad^*{K}$-orbits has zero derivative in the direction of $\vec{H}_{\mathrm{vert}}$.
\end{proof}

From Proposition~\ref{prop-gocriterion} it follows that the geodesic flow of a geodesic orbit sub-Riemannian manifold is integrable in noncommutative sense.
Recall some necessary definitions.

\begin{definition}
\label{def-completealgebra}
An algebra of functions $\mathcal{F}$ on the cotangent bundle $T^*M$ is called \emph{complete} if
$$
\dim{\sspan{\{d_xf \, | \, f \in \mathcal{F}\}}} + \dim{\Ker{\{\, \cdot \, , \, \cdot \,\}|_{\mathcal{F}}}} = \dim{T^* M}.
$$
\end{definition}

\begin{remark}
\label{rem-completealgebra}
Roughly speaking, this means that there is a set of $\dim{M} + k$ independent functions on $T^*M$,
but instead of Liouville integrable condition we have that the matrix of the Poisson brackets of these function is nonzero and has rank $2k$.
\end{remark}

Recall the definition of the momentum map and some of its properties.

\begin{definition}
\label{def-momentummap}
For $\xi \in \g$ take the corresponding velocity field $H_{\xi}$ of the action of the group $G$ on the manifold $M$:
$$
H_{\xi}(m) = \frac{d}{dt}\big|_{t=0} L_{g(t)} m \in T_mM, \qquad m \in M,
$$
where $g(t) \in G$ is the one-parametric subgroup such that $\dot{g}(0) = \xi$.
Consider this velocity field $H_{\xi}$ as a linear function on fibers of the cotangent bundle $T^*M$.
Define \emph{the momentum map}
$$
\mu : T^*M \rightarrow \g^*, \qquad \mu(\lambda)(\xi) = H_{\xi}(\lambda), \quad \text{for} \quad \lambda \in T^*M, \ \xi \in \g.
$$
\end{definition}

\begin{remark}
\label{rem-symplecticorthogonal}
Notice that $\vec{H}_{\xi}$ is a velocity field $\xi_*$ of the $G$-action on $T^*M$:
$$
\xi_*(\lambda) = \frac{d}{dt}\big|_{t=0} L_{g(t)^{-1}}^* \lambda \in T_{\lambda}T^*M, \qquad \lambda \in T^*M.
$$
By definition of a symplectic gradient we obtain the following expression for the differential of the momentum map
$$
d\mu(\xi) = dH_{\xi} = \omega(\vec{H}_{\xi}, \,\cdot\,) = \omega(\xi_*, \,\cdot\,) \quad \text{for} \quad \xi \in \g,
$$
where $\omega$ is the canonical symplectic structure on $T^*M$.
This means that the orbits of the left $G$-action on $T^*M$ and
the fibers of the momentum map are orthogonal with respect to the symplectic structure,
for details see, for example, \cite{marsden-montgomery-ratiu} or \cite{vinberg-weaklysym}.
\end{remark}

\begin{remark}
\label{rem-leftright}
For any left-invariant function $F$ on $T^*M$ and for any $\xi \in \g$ by Definition~\ref{def-momentummap} we have
$$
\xi_* \cdot F = \{H_{\xi}, F\} = \{\mu^*\xi, F\} = 0.
$$
Hence, $F$ commutes with pullback with respect of momentum map of any linear function on $\g^*$.
Then, $\{F, \mu^*\R[\g^*]\} = 0$.
\end{remark}

\begin{example}
\label{ex-completealgebra}
The algebra $\R[\m^*]^K + \mu^*(\R[\g^*])$ is complete.
This follows immediately from Remark~\ref{rem-symplecticorthogonal}.
\end{example}

\begin{definition}
\label{def-intnoncom}
A Hamiltonian vector field $\vec{H}$ on the cotangent bundle $T^*M$ is called \emph{integrable in noncommutative sense} if
there exists a complete algebra $\mathcal{F}$ of functions on $T^*M$ that are first integrals of the vector field $\vec{H}$.
In other words, $\{H, \mathcal{F}\} = 0$.
\end{definition}

\begin{theorem}
\label{th-geodesicflow}
Consider a homogeneous space $M = G/K$.
If any trajectory of a left-invariant Hamiltonian vector field $\vec{H}$ on the cotangent bundle $T^*M$ is homogeneous,
then this Hamiltonian vector field is integrable in noncommutative sense.
In particular, the geodesic flow of a geodesic orbit sub-Riemannian manifold is integrable in noncommutative sense.
\end{theorem}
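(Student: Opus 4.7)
The plan is to produce a complete algebra of first integrals of $\vec{H}$ on $T^*M$. Example~\ref{ex-completealgebra} provides the natural candidate
$$
\mathcal{F} = \R[\m^*]^K + \mu^*(\R[\g^*]),
$$
which is already known to be complete (its completeness follows from Remark~\ref{rem-symplecticorthogonal}). Thus the whole task reduces to verifying $\{H, \mathcal{F}\} = 0$, and I would split this into two independent computations corresponding to the two summands.

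First, $\{H, \mu^*(\R[\g^*])\} = 0$ is immediate from Remark~\ref{rem-leftright} applied to the left-invariant function $H$: left-invariance of $H$ is exactly what makes all velocity fields of the $G$-action annihilate $H$, which is the same as saying $H$ Poisson-commutes with $\mu^*\xi$ for every $\xi \in \g$, and hence with the whole polynomial algebra generated by such functions. Second, for $\{H, \R[\m^*]^K\} = 0$ I would follow verbatim the argument of Proposition~\ref{prop-gocriterion}: on $\m^* \cong \kk^{\circ}$ the Poisson bracket $\{H, F\}$ equals $\Hv(F|_{\m^*})$, and this derivative vanishes for every $\Ad^*K$-invariant polynomial $F$ precisely when every trajectory of $\Hv$ is tangent to an $\Ad^*K$-orbit in $\m^*$. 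In the sub-Riemannian setting this last condition is $(1)\Leftrightarrow(3)$ of Lemma~\ref{lem-sRgl}; under the more general hypothesis of the theorem, the same equivalence still holds because the proof of $(1)\Rightarrow(3)$ in Lemma~\ref{lem-sRgl} used only the $G$-left and $K$-right invariance of the Hamiltonian system on $T^*G$ together with the assumption that the projected curve on $M$ is an orbit of a one-parametric subgroup of $G$ — both of which are at our disposal for any left-invariant $\vec{H}$ with homogeneous trajectories.

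Assembling the two computations with the completeness of $\mathcal{F}$ yields non-commutative integrability of $\vec{H}$ by Definition~\ref{def-intnoncom}; the ``in particular'' clause then follows by specializing $H$ to the normal Pontryagin Hamiltonian of a geodesic orbit sub-Riemannian manifold, for which Proposition~\ref{prop-gocriterion} directly supplies $\{H, \R[\m^*]^K\} = 0$ without the need to re-examine Lemma~\ref{lem-sRgl}. The only subtle point, and what I would view as the main obstacle, is checking that the proof of $(1)\Rightarrow(3)$ in Lemma~\ref{lem-sRgl} really is insensitive to the specific form of the Hamiltonian: once one observes that it hinges purely on the symmetry structure and on the description of the lifted extremal as a composition of a one-parametric subgroup with a curve in $K$, no further calculation is needed.
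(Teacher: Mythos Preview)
Your proposal is correct and follows essentially the same route as the paper: take the complete algebra $\mathcal{F} = \R[\m^*]^K + \mu^*(\R[\g^*])$ from Example~\ref{ex-completealgebra}, use Remark~\ref{rem-leftright} for the second summand, and use (the argument behind) Proposition~\ref{prop-gocriterion} for the first. The only difference is that the paper simply cites Proposition~\ref{prop-gocriterion} for the general left-invariant Hamiltonian, whereas you take the extra care to explain why its proof (via the $(1)\Rightarrow(3)$ direction of Lemma~\ref{lem-sRgl}) carries over verbatim to any left-invariant $H$ with homogeneous trajectories; this added justification is appropriate and does not change the argument.
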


\begin{proof}
Take the complete algebra from Example~\ref{ex-completealgebra}, i.e., $\mathcal{F} = \R[\m^*]^K + \mu^*(\R[\g^*])$.
Any trajectory of the Hamiltonian vector field $\vec{H}$ is homogeneous, so, by Proposition~\ref{prop-gocriterion} we have
$\{H, \R[\m^*]^K\} = 0$.

Since $H$ is a left-invariant function on $T^*M$,
then by Remark~\ref{rem-leftright} we have $\{H, \mu^*(\R[\g^*]\} = 0$.

So, we obtain that $\{H, \mathcal{F}\} = 0$ for the complete algebra $\mathcal{F}$.
\end{proof}

\begin{remark}
\label{rem-Riemanniangoflow}
In the case of Riemannian geodesic orbit homogeneous spaces this is an observation of B.~Jovanovi\'{c}~\cite{jovanovic}.
\end{remark}

\begin{remark}
\label{rem-interation}
This result is consistent with the fact that the geodesic flow
for sub-Riemannian structures on $2$-step Carnot groups is integrable in trigonometric functions~\cite{vershik-gershkovich,myasnichenko,rizzi-serres}.
Recall that these sub-Riemannian structures are geodesic orbit, see Proposition~\ref{prop-step2}.
But sub-Riemannian structures on Carnot groups of step more than $2$ cannot be geodesic orbit.
Actually, it was shown numerically in paper~\cite{bizyaev-borisov-kilin-mamaev} that the geodesic flow of sub-Riemannian structures on $3$-step Carnot groups of rank more than $2$ is not Liouville integrable and as a consequence is not integrable in noncommutative sense.
(For $3$-step Carnot group of rank $2$, i.e., \emph{the Cartan group}, the sub-Riemannian geodesic flow is integrable in elliptic functions~\cite{sachkov-didona}.)
The sub-Riemannian geodesic flow for Carnot groups of step more than $3$ is not Liouville integrable as it was proved by L.\,V.~Lokutsievskii and Yu.\,L.~Sachkov~\cite{lokutsievskii-sachkov}.

We know near to nothing about dynamics of sub-Riemannian geodesic flow on Carnot groups of step more than $2$,
except the Cartan group~\cite{sachkov-didona}.
There are only particular results on an asymptotic behaviour of extremal controls~\cite{podobryaev1,podobryaev2}.
\end{remark}

Now we will get a series of examples of geodesic orbit homogeneous spaces.

\begin{definition}
A homogeneous space $M = G/K$ is \emph{weakly commutative} if the algebra of left-invariant polynomial functions on $T^*M$ is commutative with respect to the Poisson bracket.
\end{definition}

\begin{theorem}
If a homogeneous sub-Riemannian manifold $M = G/K$, where $G \subset \Isom{M}$, is weakly commutative,
then it is geodesic orbit.
In particular, weakly symmetric sub-Riemann\-ian manifolds are geodesic orbit.
\end{theorem}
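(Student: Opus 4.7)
The plan is to reduce everything to Proposition~\ref{prop-gocriterion} and then observe that the normal Pontryagin Hamiltonian $H$ itself belongs to the algebra $\R[\m^*]^K$, so weak commutativity of that algebra immediately yields the desired Poisson-commutation relation.

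First, I would verify that $H \in \R[\m^*]^K$. Recall $H(p) = \tfrac{1}{2}\B(p|_\Delta, p|_\Delta)$ is a homogeneous polynomial of degree $2$ on $\g^*$, restricted to $\kk^\circ \cong \m^*$. Because $G \subset \Isom{M}$, the isotropy subgroup $K$ preserves the sub-Riemannian structure; in particular, the scalar product $\B$ on $\Delta$ is $\Ad{K}$-invariant, hence its dual quadratic form on $\Delta^*$, and the extension that defines $H$ on $\m^*$, is $\Ad^*{K}$-invariant. Left-trivializing, $H$ becomes a left-invariant polynomial function on $T^*M$, i.e., an element of $\R[\m^*]^K$.

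Next, by the definition of weak commutativity, the Poisson bracket vanishes on the entire algebra of left-invariant polynomials on $T^*M$, that is, $\{\R[\m^*]^K, \R[\m^*]^K\} = 0$. Since $H \in \R[\m^*]^K$, this forces $\{H, \R[\m^*]^K\} = 0$, and Proposition~\ref{prop-gocriterion} then gives that $M$ is geodesic orbit.

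For the second statement, it suffices to argue that a weakly symmetric sub-Riemannian homogeneous space is weakly commutative. The classical Riemannian argument (going back to Selberg and generalized by Vinberg) uses only the existence, for each pair of points, of an isometry swapping them: such an isometry induces an involutive antiautomorphism of the Poisson algebra of left-invariant functions on $T^*M$ whose fixed points are all of $\R[\m^*]^K$, and this forces commutativity. This argument is purely symplectic-geometric and does not use positive-definiteness of the metric on the whole tangent space; it transfers verbatim to the sub-Riemannian setting provided only that $G$ consists of isometries in the sense of Definition~\ref{def-isom}, which is the standing assumption. The main (only) subtle point is precisely to notice that the Selberg--Vinberg argument is metric-free, so I would organize the write-up around stating this carefully and citing \cite{vinberg-weaklysym} for the purely Poisson-geometric mechanism.
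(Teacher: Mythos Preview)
Your proposal is correct and follows essentially the same approach as the paper: observe that $H \in \R[\m^*]^K$, use weak commutativity to conclude $\{H, \R[\m^*]^K\} = 0$, and apply Proposition~\ref{prop-gocriterion}. For the second statement the paper simply cites \cite{akhiezer-vinberg,vinberg-weaklysym} for the implication ``weakly symmetric $\Rightarrow$ weakly commutative'' rather than sketching the argument, but your remark that the Selberg--Vinberg mechanism is purely Poisson-geometric and hence independent of the metric is exactly the point.
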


\begin{proof}
Since the manifold $M$ is weakly commutative and the normal left-invariant Pontryagin Hamiltonian $H$ is an $\Ad^*{K}$-invariant polynomial function on $\m^*$,
we obtain $\{H, \R[\m^*]^K\} = 0$.
So, by Proposition~\ref{prop-gocriterion} the sub-Riemannian manifold $M$ is geodesic orbit.

It is well known that weakly symmetric spaces (and in particular symmetric spaces) are weakly commutative~\cite{akhiezer-vinberg,vinberg-weaklysym}.
But there are no symmetric spaces with an invariant sub-Riemannian structure~\cite{berestovskii-intristic}.
\end{proof}

See Remark~\ref{rem-weaklysym} above for references on the Riemannian version of this theorem.

\section{\label{sec-existence}Existence of a homogeneous geodesic}

In this section we prove that under some broad conditions there exists a homogeneous geodesic passing through an arbitrary point of a homogeneous sub-Riemannian manifold.
Moreover, we prove that there exists a fixed point of the vertical part of the corresponding Hamiltonian system in this case.

\begin{theorem}
\label{th-existence}
Let $(M, \Delta, \B)$ be a sub-Riemannian manifold and
let $G \subset \Isom{M}$ be a connected subgroup of isometries acting on the manifold $M$ transitively and effectively.
Let $K \subset G$ be an isotropy subgroup and let $\g = \m \oplus \kk$ be a reductive decomposition.
Denote by $\K$ the Killing form on the Lie algebra $\g$.
If $\Ker{\K} = \m$ or $\K|_{\Delta} \neq 0$, then for any point of the manifold $M$ there exists a homogeneous sub-Riemannian geodesic passing through that point.
\end{theorem}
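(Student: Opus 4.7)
The plan is to produce, in each case, a fixed point of the vertical subsystem on $\kk^{\circ}$, since by Remark~\ref{rem-fixedpoint} any such fixed point corresponds to a homogeneous geodesic, and the transitivity of $G$ then moves that geodesic to pass through any prescribed point of $M$. I fix an $\Ad K$-invariant inner product $\widehat{\B}$ extending $\B$ to $\m$, identify $\m\cong\m^*\cong\kk^{\circ}$, and rewrite the fixed-point equation $\ad^*(d_pH)p=0$. By Lemma~\ref{lem-sRgl}(2) it amounts to the existence of $Z\in\kk$ such that $\widehat{\B}(v,[\pi_\Delta v+Z,Y]_\m)=0$ for all $Y\in\g$; this is the critical-point condition for $H$ along the coadjoint $G$-orbit through the momentum $p$ corresponding to $v$.

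For the case $\K|_\Delta\neq 0$, my plan is a Rayleigh-quotient argument. I first choose the reductive complement $\m$ to be $\K$-orthogonal to $\kk$ in $\g$, and $\Delta^\perp$ inside $\m$ to be $\K$-orthogonal to $\Delta$; both choices are possible by $\Ad K$-invariance of $\K$ and make the $\widehat{\B}$-self-adjoint operator $S:\m\to\m$ defined by $\widehat{\B}(SX,Y)=\K(X,Y)$ preserve $\Delta$. On the compact sphere $\{X\in\Delta:\widehat{\B}(X,X)=1\}$ the function $X\mapsto\K(X,X)$ is non-constant by the hypothesis, so it attains a non-zero extremum at some $X_0\in\Delta$ with Lagrange multiplier $\mu\neq 0$. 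Because $S$ preserves $\Delta$, the identity $\K(X_0,Y)=\mu\widehat{\B}(X_0,Y)$ then actually holds for all $Y\in\m$. Combining the $\ad$-invariance of $\K$ (which gives $\K(X_0,[X_0,Y])=0$ for every $Y\in\g$) with the $\K$-orthogonality $\K(\m,\kk)=0$ yields $\K(X_0,[X_0,Y]_\m)=0$ for $Y\in\g$, and the eigenvalue relation converts this into $\widehat{\B}(X_0,[X_0,Y]_\m)=0$. Hence $p_0=\widehat{\B}(X_0,\cdot)|_\m$, for which $d_{p_0}H=X_0$, is a fixed point of the vertical subsystem, with $Z=0$.

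For the case $\Ker\K=\m$, the kernel of an $\ad$-invariant symmetric form is always an ideal, so $\m$ is an ideal of $\g$. The connected normal subgroup $N\subset G$ with Lie algebra $\m$ acts transitively on $M$, and the problem reduces to finding a homogeneous geodesic for the left-invariant sub-Riemannian structure on $N$. Cartan's criterion applied to the $\K$-isotropic ideal $\m=\n$ shows $\n$ is solvable, so $[\n,\n]\subsetneq\n$; since $\Delta$ is bracket-generating we have $\Delta+[\n,\n]=\n$, so there exists $p_0\in[\n,\n]^\circ\subset\n^*$ with $p_0|_\Delta\neq 0$. For every $X\in\n$ and every $Y\in\n$ the bracket $[X,Y]$ lies in $[\n,\n]$, so $p_0([X,Y])=0$; in particular $\ad^*(d_{p_0}H)p_0=0$ and $p_0$ is the required fixed point (with non-constant orbit since $d_{p_0}H=\B^{-1}(p_0|_\Delta)\neq 0$).

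The main obstacle will be the first case: arranging both $\K$-orthogonality conditions (of $\m$ to $\kk$ in $\g$, and of $\Delta^\perp$ to $\Delta$ in $\m$) simultaneously with the reductive structure is delicate, and the full strength of the argument seems to require suitable non-degeneracy of $\K$ on the relevant subspaces. If the Lagrange extremum on $\Delta$ fails to inherit the eigenvalue identity on all of $\m$, the freedom of adjusting the geodesic vector by an element $Z\in\kk$ granted by Lemma~\ref{lem-sRgl}(2) will be needed to absorb the residual $\Delta^\perp$-component and close the argument.
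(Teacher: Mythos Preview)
Your solvable case ($\Ker\K=\m$) is essentially the paper's argument. The only quibble is that $\Delta+[\n,\n]=\n$ is stronger than you need or can directly justify; what actually follows from bracket-generation (since $[\m,\m]+\kk$ is a subalgebra, and if it contained $\Delta$ it would equal $\g$, forcing $[\m,\m]=\m$ against solvability) is merely $\Delta\not\subset[\n,\n]$, which already suffices to produce $p_0\in[\n,\n]^\circ$ with $p_0|_\Delta\neq 0$.

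The genuine gap is in the case $\K|_\Delta\neq 0$. Your Rayleigh-quotient step yields $\K(X_0,Y)=\mu\,\widehat{\B}(X_0,Y)$ only for $Y\in\Delta$; extending it to all $Y\in\m$ requires $\K(\Delta,\Delta^\perp)=0$, i.e.\ an $\Ad K$-invariant complement $\Delta^\perp$ that is also $\K$-orthogonal to $\Delta$. Such a complement need not exist: it fails whenever $\ker(\K|_\Delta)\supsetneq\Delta\cap\Ker(\K|_\m)$, for instance with $\m=\R^3$, $\K=\mathrm{diag}(1,-1,1)$, $\Delta=\sspan\{e_1+e_2,\,e_3\}$, where the $\K$-orthogonal of $\Delta$ in $\m$ is $\sspan\{e_1+e_2\}\subset\Delta$. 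Adjusting by $Z\in\kk$ does not help, since the obstruction lives entirely in $\m$. The paper closes this gap with two devices you are missing. First, it quotients by the ideal $\rr=\Ker\K$ (Lemma~\ref{lem-factorization}) to reduce to $\K$ non-degenerate on $\g$. Second, instead of working on all of $\Delta$, it passes to the subspace $\Gamma\subset\Delta$ defined as the $\B$-orthogonal complement of $\Delta\cap\Delta^{\perp_\K}$; one checks $\K|_\Gamma$ is non-degenerate, so $\g=\Gamma\oplus\Gamma^{\perp_\K}$, and then chooses $\widehat{\B}$ on all of $\g$ making this decomposition $\widehat{\B}$-orthogonal. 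Now $A=\K^{-1}\widehat{\B}$ genuinely preserves $\Gamma$, and any eigenvector $X\in\Gamma$ (with necessarily non-zero eigenvalue) satisfies $\widehat{\B}(X,\cdot)=\lambda\K(X,\cdot)$ on all of $\g$, whence $p([X,\g])=0$ by $\ad$-invariance of $\K$.
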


\begin{proof}
It is sufficient to show that there exists such geodesic passing through the point $o = e K$.

Denote by $\rr = \Ker{\K}$ the kernel of the Killing form.
Note that $\rr$ is a solvable ideal in the Lie algebra $\g$.
Since the Killing form is non degenerate on $\kk$ \cite{kowalski-szenthe}, we obtain $\rr \subset \m$.
There are two cases: $\rr = \m$ and $\rr \subsetneq \m$.

In the fist case following~\cite{kowalski-szenthe} we can assume that a solvable group of isometries acts on the manifold $M$ transitively.
Let us denote this group by the same letter $G$.
Then $[\g, \g] \neq \g$.
So, there exists $p \in \g^*$ such that $p([\g, \g]) = 0$ and the conditions of Lemma~\ref{lem-sRgl} are satisfied automatically.

Consider now the second case $\rr \subsetneq \m$.
The Lie group corresponding to the ideal $\rr$ acts on the manifold $M$.
We will reduce our sub-Riemannian structure to a sub-Riemannian structure on the factor manifold by this action (see Lemma~\ref{lem-factorization} below).
Any geodesic of reduced sub-Riemannian structure lifts to the geodesic of original sub-Riemannian structure.
Herewith homogeneous geodesics lift to homogeneous geodesics.

After that  in Lemma~\ref{lem-semisimple} we consider a semisimple group of isometries of the new sub-Riemannian structure such that
the restriction of the Killing form to the sub-Riemannian distribution is non trivial.
Then Theorem~\ref{th-existence} follows from Lemmas~\ref{lem-factorization}, \ref{lem-semisimple}.
\end{proof}

\begin{lemma}
\label{lem-factorization}
Let $\ii \vartriangleleft \g$ be an ideal such that it does not contain the distribution $\Delta$.
There exists an invariant sub-Riemannian structure on $G/\widehat{K}$ such that
any geodesic of this structure lifts to a geodesic of the sub-Riemannian structure on $G/K$,
where $\widehat{K} \subset G$ is a Lie subgroup with the tangent algebra $\kk+\ii$.
\end{lemma}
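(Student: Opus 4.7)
The plan is to construct the sub-Riemannian structure on $\widehat{M} = G/\widehat{K}$ so that its Pontryagin Hamiltonian is the restriction of the Hamiltonian $H$ of $(M, \Delta, \B)$ to the smaller annihilator $\widehat{\kk}^\circ \subset \kk^\circ$, exploiting the inclusion $\kk \subset \widehat{\kk}$.

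First, since $\ii$ is an ideal, $\widehat{\kk} = \kk + \ii$ is a subalgebra, so the connected Lie subgroup $\widehat{K} \subset G$ with Lie algebra $\widehat{\kk}$ is well-defined. On $\widehat{\m} := \g/\widehat{\kk}$ I would take $\widehat{\Delta}$ to be the image of $\Delta$ under the projection $\m \hookrightarrow \g \twoheadrightarrow \widehat{\m}$, and define $\widehat{\B}$ on $\widehat{\Delta}$ as the pushforward (through this projection) of the restriction of $\B$ to the $\B$-orthogonal complement of $\Delta \cap \widehat{\kk}$ in $\Delta$. A direct verification then gives
$$
\widehat{H}(\hat{p}) := \tfrac{1}{2}\widehat{\B}(\hat{p}|_{\widehat{\Delta}}, \hat{p}|_{\widehat{\Delta}}) = \tfrac{1}{2}\B(\hat{p}|_\Delta, \hat{p}|_\Delta) = H(\hat{p}), \qquad \hat{p} \in \widehat{\kk}^\circ,
$$
since $\hat{p}$ annihilates $\Delta \cap \widehat{\kk}$ and so $\hat{p}|_\Delta$ descends to $\hat{p}|_{\widehat{\Delta}}$.

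Next I would verify the remaining properties. Non-triviality of $\widehat{\Delta}$ reduces to $\Delta \not\subset \widehat{\kk}$, which follows from the hypothesis $\Delta \not\subset \ii$ (in the application of Theorem~\ref{th-existence} one has $\ii \subset \m$, so $\m \cap \widehat{\kk} = \ii$ and the two conditions coincide). Bracket-generation of $\widehat{\Delta}$ on $\widehat{M}$ is inherited from bracket-generation of $\Delta$ in $\g$: the preimage $\Delta + \widehat{\kk} \subset \g$ has iterated brackets spanning all of $\g$, and hence spans $\g \bmod \widehat{\kk}$. For $G$-invariance of the structure it suffices to check $\Ad^*\widehat{K}$-invariance of $\widehat{H}$, and beyond the already-known $\Ad^*K$-invariance of $H$ only the infinitesimal action of $\ii$ is at issue: for $Z \in \ii$ and $\hat{p} \in \widehat{\kk}^\circ$,
$$
\frac{d}{dt}\bigg|_{t=0} H(\Ad^*\exp(tZ)\,\hat{p}) = -\hat{p}([Z, d_{\hat{p}}H]) = 0,
$$
because $[Z, d_{\hat{p}}H] \in [\ii, \g] \subset \ii \subset \widehat{\kk}$ and $\hat{p}|_{\widehat{\kk}} = 0$.

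Finally, to lift a geodesic $\widehat{\gamma}$ on $\widehat{M}$ with initial momentum $\hat{p} \in \widehat{\kk}^\circ$, I would apply the same ideal identity — now with an arbitrary $Y = Y_{\kk} + Y_{\ii} \in \widehat{\kk}$ in $\dot p(Y) = -p([d_p H, Y])$ — to conclude that the vertical part of the Hamiltonian flow of $H$ preserves $\widehat{\kk}^\circ$. Combined with $H|_{\widehat{\kk}^\circ} = \widehat{H}$, this means that the Hamiltonian trajectory of $\widehat{H}$ in $T^*G$ from $(\hat{p}, e)$ coincides with the Hamiltonian trajectory of $H$ from the same point; the latter projects to a normal geodesic $\gamma$ on $M$ (valid since $\hat{p} \in \kk^\circ$), and its further projection via $\phi : M \to \widehat{M}$ recovers $\widehat{\gamma}$. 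The main obstacle is the matching of the two Hamiltonian systems on $\widehat{\kk}^\circ$, and everything reduces to the ideal property $[\ii, \g] \subset \ii$, which simultaneously delivers both the $\Ad^*\widehat{K}$-invariance of $\widehat{H}$ and the invariance of $\widehat{\kk}^\circ$ under the flow of $H$.
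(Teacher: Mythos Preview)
Your argument is correct and follows essentially the same route as the paper: take the $\B$-orthogonal complement of $\Delta\cap\ii$ (you use $\Delta\cap\widehat{\kk}$, which coincides in the application since $\ii\subset\m$), push it forward to define $\widehat{\Delta}$ with the inherited scalar product, and then match the two Pontryagin Hamiltonians on the smaller annihilator so that extremals of the reduced problem are literally extremals of the original one. Your write-up is in fact more thorough than the paper's --- you explicitly verify $\Ad^*\widehat{K}$-invariance of $\widehat{H}$, bracket-generation, and invariance of $\widehat{\kk}^\circ$ under the vertical flow, all via the single identity $[\ii,\g]\subset\ii$ --- whereas the paper states these consequences without spelling them out.
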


\begin{proof}
Consider the subspace $\ii_{\Delta} = \Delta \cap \ii$ and its orthogonal complement $\ii_{\Delta}^{\bot_{\B}}$ in $\Delta$ with respect to the form $\B$.
Let $\widehat{\Delta} = \pi(\ii_{\Delta}^{\bot_{\B}})$ be the image of this orthogonal complement under the factorization map $\pi : \g \rightarrow \g/\ii$.
Notice that the subspace $\widehat{\Delta}$ and the subalgebra $\pi(\kk)$ generate the Lie algebra $\g/\ii$ since $\ii$ is an ideal and $\Delta + \kk$ generates $\g$.
Moreover, there exists a scalar product on $\widehat{\Delta}$ that is inherited from the scalar product $\B$ on $\Delta$.
So, we can define a sub-Riemannian structure on $G/\widehat{K}$.

Any extremal of the sub-Riemannian problem on $G/\widehat{K}$ is an extremal of the sub-Riemannian problem on $G/K$
with an initial momentum from the subspace $(\kk + \Delta\cap\ii)^{\circ} \subset \kk^{\circ}$.
This means that the corresponding geodesic on $G/\widehat{K}$ lifts to a geodesic on $G/K$.
\end{proof}

\begin{remark}
\label{rem-chaplygin}
Note that in this case the sub-Riemannian structure is a kind of generalization of a Chaplygin system~\cite{alekseevsky}.
\end{remark}

\begin{lemma}
\label{lem-semisimple}
Assume that the restriction of the Killing form to the subspace $\Delta$ is non trivial.
Then there exists a homogeneous sub-Riemannian geodesic.
\end{lemma}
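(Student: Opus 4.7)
The plan is to exhibit a non-zero fixed point of the vertical Pontryagin subsystem lying in $\kk^{\circ}$; by Remark~\ref{rem-fixedpoint} such a momentum is the initial covector of a homogeneous geodesic through $o = eK$, which is what we need.

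First I would use that, for the effective $G$-action, the Killing form $\K$ is non-degenerate on $\kk$ (as recalled in~\cite{kowalski-szenthe}), so the reductive complement may be chosen canonically as $\m = \kk^{\perp_{\K}}$. This ensures $\K(\Delta, \kk) = 0$, and hence the linear functional $\K(X, \,\cdot\,)$ on $\g$ automatically lies in $\kk^{\circ}$ whenever $X \in \m$. The key step will then be to introduce the $\B$-self-adjoint operator $A : \Delta \to \Delta$ determined by
$$
\B(AZ, W) = \K(Z, W) \qquad \text{for } Z, W \in \Delta.
$$
The hypothesis $\K|_{\Delta} \neq 0$ forces $A \neq 0$, and $\B$-self-adjointness supplies a non-zero real eigenvalue $\lambda$ with an eigenvector $X \in \Delta \setminus \{0\}$. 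My ansatz for the initial momentum will be $p(Y) = \K(X, Y)$ for $Y \in \g$.

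With this choice, $p \in \kk^{\circ}$ by the previous paragraph, and the eigenvector relation $\K(X, Z) = \lambda \B(X, Z)$ for $Z \in \Delta$ yields $p|_{\Delta} = \lambda \B(X, \,\cdot\,)|_{\Delta}$; the definition $H(p) = {{1}\over{2}}\B(p|_{\Delta}, p|_{\Delta})$ then gives $d_pH = \lambda X \in \Delta$. The fixed-point condition $p([d_pH, \g]) = 0$ reduces to a single line via the $\ad$-invariance of the Killing form:
$$
p([d_pH, Y]) = \lambda \K(X, [X, Y]) = \lambda \K([X, X], Y) = 0 \qquad \text{for all } Y \in \g.
$$
Non-triviality of the resulting geodesic is immediate from $d_pH = \lambda X \neq 0$. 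I expect the only non-routine step to be finding the correct ansatz $p = \K(X, \,\cdot\,)$ with $X$ an eigenvector of $A$; after that, the verification reduces to the eigenvector equation together with $\ad$-invariance.
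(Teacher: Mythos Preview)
Your argument is correct, and it is in fact more streamlined than the paper's. Both proofs hinge on the same idea: pick $X$ to be an eigenvector of the operator relating $\B$ and $\K$, set the initial momentum to be $\K(X,\,\cdot\,)$ (up to normalization), and use $\ad$-invariance of the Killing form to kill $p([d_pH,\g])$. The paper, however, first invokes Lemma~\ref{lem-factorization} to pass to the semisimple case $\rr=0$, then carves out an auxiliary subspace $\Gamma\subset\Delta$ on which $\K$ is non-degenerate, extends $\B$ to a scalar product $\widehat{\B}$ on all of $\g$ with $\Gamma\perp_{\widehat{\B}}\Gamma^{\perp_{\K}}$, and works with the global operator $A=\K^{-1}\widehat{\B}:\g\to\g$, showing that $\Gamma$ is $A$-invariant and citing the Riemannian result of Kowalski--Szenthe. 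Your version sidesteps all of this: by defining $A$ directly on $\Delta$ via $\B(AZ,W)=\K(Z,W)$ you never need $\K$ to be invertible on $\g$, so the reduction to the semisimple case is unnecessary, and there is no need to extend $\B$ or to isolate a subspace on which $\K$ is non-degenerate. What the paper's route buys is an explicit link to the Riemannian Kowalski--Szenthe theorem (your eigenvector is literally a Riemannian geodesic vector for $\widehat{\B}$), whereas your route buys brevity and shows that Lemma~\ref{lem-semisimple} already holds without the preliminary factorization step. One small point worth making explicit in a final write-up: the freedom to take $\m=\kk^{\perp_{\K}}$ does not affect the Hamiltonian $H$ on $\kk^{\circ}$, since for $p\in\kk^{\circ}$ the restriction $p|_{\Delta}$ is insensitive to shifting the lift of $\Delta_o$ by elements of $\kk$.
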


\begin{proof}
The kernel $\rr$ of the Killing form does not contain the subspace $\Delta$ since $\K|_{\Delta} \neq 0$.
According to Lemma~\ref{lem-factorization} we can assume that $\rr = 0$.

Denote by $\Delta^{\bot_{\K}}$ the orthogonal complement of $\Delta$ with respect to the Killing form.
Take the subspace $\Gamma \subset \Delta$ that is the orthogonal complement of $\Delta \cap \Delta^{\bot_{\K}}$ in $\Delta$ with respect to the form $\B$.
Obviously, $\Gamma^{\bot_{\K}} \supset \Delta^{\bot_{\K}}$.

Moreover, $\Gamma \cap \Gamma^{\bot_{\K}} = 0$.
Indeed, suppose by contradiction that there exists $\xi \in \Gamma \cap \Gamma^{\bot_{\K}}$ such that $\xi \neq 0$.
Then since $\xi \in \Delta$ we have $\K(\xi, \Delta \cap \Delta^{\bot_{\K}})$ and since $\xi \in \Gamma^{\bot_{\K}}$, we obtain $\K(\xi, \Gamma) = 0$.
So, by definition of $\Gamma$ we get $\K(\xi, \Delta) = 0$.
It follows that $\xi \in \Delta \cap \Delta^{\bot_{\K}}$ in contradiction with $\xi \in \Gamma$.
Next, since $\K$ is non degenerate on $\g$, we have $\g = \Gamma \oplus \Gamma^{\bot_{\K}}$.

Now construct a scalar product $\widehat{\B}$ on $\g$ such that $\widehat{\B}|_{\Delta} = \B_{\Delta}$ and
the subspaces $\Gamma$ and $\Gamma^{\bot_{\K}}$ are orthogonal with respect of $\widehat{\B}$.

According to the result of O.~Kowalski and J.~Szenthe~\cite{kowalski-szenthe} for the Riemannian metric that is defined by the scalar product $\widehat{\B}$
there exists a homogeneous geodesic.
We will show that the initial momentum of this geodesic is also an initial momentum of a homogeneous geodesic of our sub-Riemannian structure.
In other words, we will find a geodesic vector that is one of the main axes of the forms $\K$ and $\widehat{\B}$ at the same time and
we prove that this vector belongs to the subspace $\Gamma \subset \Delta$.

Following paper~\cite{kowalski-szenthe} consider the operator $A = \K^{-1}\widehat{\B} : \g \rightarrow \g$, where
$$
\begin{array}{lll}
\K : \g \rightarrow \g^*, & \quad & \g \ni \xi \mapsto \K(\xi,\,\cdot\,),\\
\widehat{\B} : \g \rightarrow \g^*, & \quad & \g \ni \xi \mapsto \widehat{\B}(\xi,\,\cdot\,).\\
\end{array}
$$
Note that $\Gamma$ is an invariant subspace of the operator $A$.
Indeed, $\widehat{\B}(\Gamma, \Gamma^{\bot_{\K}}) = 0$, in other words $\widehat{\B}(\Gamma) \subset (\Gamma^{\bot_{\K}})^{\circ} = \K(\Gamma)$,
this means that $\K^{-1}\widehat{\B}(\Gamma) \subset \Gamma$.

Take now an eigenvector $X$ of the operator $A$ in the subspace $\Gamma$ with non zero eigenvalue $\lambda \neq 0$.
Then for $p = \widehat{\B}(X)$ we have
$$
H(p) = {{1}\over{2}}\B(p|_{\Delta}, p|_{\Delta}), \qquad d_pH = \widehat{\B}^{-1}p = X.
$$
We obtain
$$
\begin{array}{lcl}
p([d_pH, \g]) & = & \widehat{\B}(X, [d_pH, \g]) = \widehat{\B}(X, [X, \g]) = \\
& = & \K(AX, [X, \g]) = {{1}\over{\lambda}}\K(X, [X, \g]) = {{1}\over{\lambda}}\K([X,X],\g) = 0.\\
\end{array}
$$
Therefore conditions of Lemma~\ref{lem-sRgl} are satisfied.
\end{proof}

From the proof of Theorem~\ref{th-existence} immediately follows

\begin{corollary}
\label{crl-solv-semisimple}
$(1)$ If a solvable group of isometries acts transitively on a sub-Riemannian manifold,
then there is a homogeneous geodesic passing through an arbitrary point.\\
$(2)$ If a semisimple group of isometries acts transitively on a sub-Riemannian manifold and
the restriction of the Killing form to the sub-Riemannian distribution is non trivial,
then there is a homogeneous geodesic passing through an arbitrary point.
\end{corollary}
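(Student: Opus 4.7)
The plan is to read off both parts directly from the case analysis in the proof of Theorem~\ref{th-existence}, since the two hypotheses of the corollary line up exactly with the two cases that appear there.

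First, for part~(2), I would observe that semisimplicity of $G$ makes the Killing form $\K$ non-degenerate on $\g$, so $\rr := \Ker{\K} = 0$, and in particular $\rr \subsetneq \m$. Combined with the standing hypothesis $\K|_{\Delta} \neq 0$, this places us in the second case of the proof of Theorem~\ref{th-existence}: Lemma~\ref{lem-factorization} is vacuous because there is no ideal to factor out, and Lemma~\ref{lem-semisimple} directly produces a homogeneous geodesic through $o = eK$. Since $G$ acts transitively by isometries, translating by a suitable $g \in G$ propagates this homogeneous geodesic to any prescribed point of $M$.

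For part~(1), I would invoke the first case of the same proof. Solvability of the transitive isometry group lets me apply the Kowalski--Szenthe reduction~\cite{kowalski-szenthe} and assume $G$ itself is a solvable group acting transitively. Then $[\g, \g] \neq \g$, so there is a nonzero $p \in \kk^{\circ} \cap [\g, \g]^{\circ}$. For any such $p$ the identity $p([X, \g]) = 0$ holds for \emph{every} $X \in \g$, in particular for $X = d_p H$, so condition~(2) of Lemma~\ref{lem-sRgl} is satisfied automatically and the geodesic with initial momentum $p$ is homogeneous through $o$; transitivity again promotes this to an arbitrary point.

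The main technical step is justifying, in part~(1), the existence of a nonzero element in $\kk^{\circ} \cap [\g,\g]^{\circ}$ after the solvable reduction --- equivalently, that $\kk + [\g,\g] \neq \g$. This is where the Kowalski--Szenthe reduction is essential: it arranges the transitive solvable subgroup so that either $\kk = 0$ (and the claim is immediate from $[\g,\g] \neq \g$) or the isotropy sits inside a complement to $[\g,\g]$, keeping the needed annihilator nontrivial. Once this is in hand, the rest of both statements is immediate from Lemmas~\ref{lem-sRgl}, \ref{lem-factorization}, and \ref{lem-semisimple}.
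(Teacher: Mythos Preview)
Your approach is exactly the paper's: the corollary is stated right after Theorem~\ref{th-existence} with the one-line justification that it ``immediately follows'' from that proof, and you have correctly matched part~(2) to the second case (Lemma~\ref{lem-semisimple}, with Lemma~\ref{lem-factorization} vacuous since $\rr=0$) and part~(1) to the first case.

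One comment on your last paragraph. You have put your finger on a genuine subtlety that the paper itself glosses over: in the first case the paper only writes ``there exists $p\in\g^*$ such that $p([\g,\g])=0$'', without checking $p\in\kk^{\circ}$, which is what Lemma~\ref{lem-sRgl} actually requires. Your first alternative --- that the Kowalski--Szenthe reduction yields a solvable group acting \emph{simply} transitively, so $\kk=0$ and $\kk^{\circ}=\g^*$ --- is the operative one and is what the cited argument supplies. Your second alternative, that ``the isotropy sits inside a complement to $[\g,\g]$'', is not something Kowalski--Szenthe arrange and would fail in easy examples (e.g.\ $G=\SE(2)$ acting on $\R^2$, where $\kk+[\g,\g]=\g$); drop it and rely solely on the simply-transitive reduction.
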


\section*{\label{sec-conclusion}Conclusion}

We generalized several facts known about Riemannian homogeneous geodesics to the sub-Riemannian case.
We used the Hamiltonian approach to formulate an analogue of the Geodesic Lemma, i.e.,
a criterion for geodesic to be homogeneous.

We studied several properties of geodesic orbit sub-Riemannian manifolds.
In parti\-cu\-lar, the corresponding geodesic flow is integrable in non-commutative sense.
Since we obtain that Carnot groups of step more than $2$ cannot be geodesic orbit,
this agrees with previously known facts about the integrability of geodesic flows on Carnot groups.

Finally, we proved that there exists at least one homogeneous geodesic for a sub-Riemannian manifold with
a semisimple group of isometries such that the restriction of the Killing form to the distribution is non trivial.
However, it is still unknown if this condition is essential.
It seems that it is possible to avoid this condition using another method for proof.

Nevertheless, several important questions remain open.
How to describe all geodesic orbit sub-Riemannian manifolds?
How to describe all homogeneous geodesics of a given sub-Riemannian structure?
It requires the knowledge of the group of isometries, that is a separate problem.

\end{document}